\documentclass[12pt]{article}
 \usepackage{amsthm,amsmath,amssymb}
 \usepackage{color}
 
   \usepackage[all]{xy}
  \usepackage{graphicx}

\theoremstyle{plain}
\newtheorem{Thm}{Theorem}

\newtheorem{lemma}[Thm]{Lemma}
\newtheorem{Prop}[Thm]{Proposition}
\newtheorem{Def}[Thm]{Definition}

\newtheorem{remark}[Thm]{Remark}

\newcommand{\twotwo}{\mathbf{2+2}}
\newcommand{\threeone}{\mathbf{3+1}}

\date{July 25,  2017 }

\title{Interval Orders with Two Interval Lengths}

\author{Simona Boyadzhiyska\\
\small Berlin Mathematical School \\
 \small Freie Universit\"{a}t Berlin \\
\small Berlin, Germany \\
\small\tt \  s.boyadzhiyska@fu-berlin.de
\and
Garth Isaak\\
\small Department of Mathematics\\
\small Lehigh University\\
\small Wellesley MA 02481\\
\small\tt  gi02@lehigh.edu 
\and
Ann  N. Trenk\thanks{
This work was supported by a grant from the Simons Foundation (\#426725, Ann Trenk). } \\
\small Department of Mathematics\\
\small Wellesley College\\
\small Wellesley MA 02481\\
\small\tt atrenk@wellesley.edu
}


\begin{document}

\maketitle

\begin{abstract} 
A poset $P = (X,\prec)$ has an interval representation if each $x \in X$ can be assigned a real interval $I_x$ so that $x \prec y$ in $P$ if and only if $I_x$ lies completely to the left of $I_y$.    Such orders are called \emph{interval orders}.   In this paper we give a surprisingly simple forbidden poset characterization of those posets that have an interval representation in which each interval length is either 0 or 1.    In addition, for posets $(X,\prec)$ with a weight of 1 or 2 assigned to each point, we characterize those that have an interval representation in which for each $x \in X$ the length of the interval assigned to $x$ equals the weight assigned to $x$.  For both these problems we can determine in polynomial time whether the desired interval representation is possible and in the affirmative case, produce such a representation.
\end{abstract}

\bibliographystyle{plain} 

 \bigskip\noindent \textbf{Keywords:  Interval order, interval graph, semiorder }

\section{Introduction}

\subsection{Posets and Interval Orders}
A \emph{poset} $P$ consists of a set $X$ of \emph{points} and a relation $\prec$ that is irreflexive and transitive, and therefore antisymmetric.   We consider only posets in which $X$ is a finite set.   It is sometimes convenient to write $y \succ x$ instead of $x \prec y$.    If $x \prec y$ or $y \prec x$ we say that $x$ and $y$ are \emph{comparable}, and otherwise we say they are \emph{incomparable}, and denote the incomparability by $x \parallel y$.   The set of all points incomparable to $x$ is called the \emph{incomparability set of $x$} and  denoted by $Inc(x)$.   An \emph{interval representation} of a poset  $P=(X,\prec)$ is an assignment of a closed real interval $I_v$ to each $v\in X$ so that 
$x \prec y$ if and only if   $I_x$   is completely to the left of $I_y$.   We sometimes denote an interval representation by   ${\mathcal I} = \{I_v: v \in X\}$.  A poset with such a representation is called an \emph{interval order}.  It is well-known that  the classes studied in this paper are the same if open intervals are used instead of closed intervals, e.g., see Lemma 1.5 in \cite{GoTr04}.

The poset $\twotwo$ shown in  Figure~\ref{chains-fig}  consists of four elements $\{a,b,x,y\}$ and the only comparabilities are $a \prec b$ and $x \prec y$.  Interval orders have a lovely characterization theorem that was anticipated by Wiener in 1914 (see \cite{FiMo92}) and shown by Fishburn \cite{Fi70}:
Poset $P$ is an interval order if and only if it contains no induced $\twotwo$.

Interval orders can be used to model scheduling problems. For example, a set $X$ of events together with a time interval $I_x$ for each $x \in X$ produces an interval  order in which $x \prec y$ precisely when event $x$ ends before event $y$ begins.  In some applications, there may be restrictions on the interval lengths in an interval representation.    Posets  that have an interval representation in which all intervals are the same length are known as  \emph{unit interval orders} or \emph{semiorders}.    The poset $\threeone$, consisting of four elements $\{a,b,c,x\}$  whose only comparabilities are $a \prec b \prec c$,  is not a unit interval order.  Indeed, Scott and Suppes \cite{ScSu58} characterize unit interval orders as those posets with no induced $\twotwo$ and no induced $\threeone$.  Figure~\ref{chains-fig} shows the posets $\twotwo$ and  $\threeone$.  

\begin{figure}
\begin{center}
 \begin{picture}(150,60)(0,15)
\thicklines

\put(20,40){\circle*{5}}
\put(20,70){\circle*{5}}

\put(50,40){\circle*{5}}
\put(50,70){\circle*{5}} 
\put(20,40){\line(0,1){30}}
\put(50,40){\line(0,1){30}}

\put(7,38){$a$}
\put(7,68){$b$}

\put(55,38){$x$}
\put(55,68){$y$}

\put(22,10){$\twotwo$}

\put(105,10){$\threeone$}


\put(120,35){\circle*{5}}
\put(120,55){\circle*{5}}
\put(120,75){\circle*{5}}
\put(140,55){\circle*{5}}
\put(120,35){\line(0,1){40}}

\put(107,33){$a$}
\put(107,53){$b$}
\put(107,73){$c$}
\put(145,53){$x$}
  


  \end{picture}
  \end{center}

\caption{The posets $\twotwo$  and   $\threeone$.}

\label{chains-fig}
 \end{figure}

In this paper, we consider interval orders that arise from representations where there are restrictions on the interval lengths.  The classes we consider are between the two extremes of interval orders (no restrictions on interval lengths) and unit interval orders (all intervals the same length).  

\subsection{Digraphs and Potentials}

A {\em directed graph}, or {\em digraph}, is a pair $G=(V,E)$, where $V$ is a finite set of {\em vertices}, and $E$ is a set of ordered pairs $(x,y)$ with $x,y\in V$, called {\em arcs}.   A \emph{weighted digraph} is a digraph in which each arc $(x,y)$ is assigned a real number weight denoted by  $wgt(x,y)$ or  $w_{xy}$.  We sometimes denote the arc $(x,y)$ by $x\rightarrow y$, and in a weighted digraph, by $x \xrightarrow{w_{xy}}y$.  A \emph{potential function}  $p:V\rightarrow \mathbb{R}$, defined on the vertices of a weighted digraph, is a function satisfying $p(y) - p(x) \leq w_{xy}$ for each arc  $(x,y)$.  Theorem~\ref{potential-no-neg} is a well-known result that   specifies precisely which digraphs have potential functions.

An \emph{$xy$-walk} in a digraph $G$ is a sequence of vertices  $W:   x_1, x_2,\dots, x_{t-1}, x_t  $ so that  $x = x_1$, $y = x_t$, and $(x_{i}, x_{i+1})$  is an arc for $i = 1,2,3, \ldots, t-1$.  A \emph{cycle} in digraph $G$ is a sequence of distinct vertices $C:  x_1, \dots, x_{t-1}, x_t $  so that $(x_{i}, x_{i+1})$  is an arc for $i = 1,2,3, \ldots, t-1$ and $(x_t,x_1)$ is also an arc of $G$.  

The \emph{weight} of a walk or cycle in a weighted digraph is the sum of the weights of the arcs it includes.   We write $wgt(C)$ to denote the weight of cycle $C$.     A cycle with negative weight is called a \emph{negative cycle}.  The following theorem is well-known, see Chapter 8 of \cite{Sc03}, for example.  We provide a proof  for completeness and because the proof provides  part of an algorithm for producing a potential function in a weighted digraph with no negative cycle.

\begin{Thm}
A weighted digraph has a potential function if and only if it contains no negative cycle.
\label{potential-no-neg}
\end{Thm}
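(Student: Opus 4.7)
The plan is to prove both directions separately, with the harder direction being the construction of a potential when no negative cycle exists.

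For the forward (necessity) direction, I will suppose a potential $p$ exists and let $C: x_1, x_2, \ldots, x_t, x_1$ be any cycle. Applying the defining inequality $p(x_{i+1}) - p(x_i) \leq w_{x_i x_{i+1}}$ to each consecutive pair and to the closing arc $(x_t, x_1)$, then summing all $t$ inequalities, the left-hand sides telescope to $0$, yielding $0 \leq wgt(C)$. Hence no cycle can have negative weight. This step is a one-line telescoping calculation and poses no difficulty.

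For the reverse (sufficiency) direction, the strategy is to define $p$ using shortest-walk weights. Since the digraph $G$ need not be strongly connected, I will first augment $G$ by adding a new source vertex $s$ together with arcs $s \to v$ of weight $0$ for every $v \in V$. The augmented digraph still has no negative cycle, because $s$ has no incoming arc and therefore cannot lie on any cycle, and all other cycles are cycles of $G$. For each $v \in V$, define $p(v)$ to be the minimum weight of an $sv$-walk in the augmented digraph. The key observation making this well-defined is that, in the absence of negative cycles, the infimum over walks is attained by a simple path: any walk that repeats a vertex contains a cycle of nonnegative weight which can be excised without increasing total weight. Since there are only finitely many simple paths, the minimum exists.

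It then remains to verify that $p$ satisfies $p(y) - p(x) \leq w_{xy}$ for every arc $(x,y)$ of $G$. This follows because prepending the arc $(x,y)$ to any shortest $sx$-walk gives an $sy$-walk of weight $p(x) + w_{xy}$, which must be at least $p(y)$ by minimality. The main conceptual step, and the one worth highlighting, is the augmentation by a source vertex, since without it one must treat connectivity complications or choose separate sources in each component; the remaining verification is immediate from the definition. A useful byproduct, as noted in the excerpt, is that computing these shortest-walk weights (e.g., via the Bellman--Ford algorithm) simultaneously yields the required potential function constructively.
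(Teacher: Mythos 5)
Your proof is correct and follows essentially the same route as the paper: telescoping the potential inequalities around a cycle for necessity, and a shortest-walk potential (well-defined by excising nonnegative cycles, verified by prepending an arc to a shortest walk) for sufficiency. The super-source augmentation is only a cosmetic repackaging of the paper's definition, which takes $\overline{p}(y)$ to be the minimum weight of a walk ending at $y$ starting anywhere, so no connectivity issue arises there either.
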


\begin{proof}
Suppose weighted digraph $G$ has a potential function $p$.  If $G$ contains a negative weight cycle $C:   x_1, \dots, x_{t-1}, x_t $, then summing the inequalities $p(x_{i+1}) - p(x_i) \le w_{x_{i}x_{i+1}}$ for $i = 1, 2, 3, \ldots, t-1$ with the inequality $p(x_1) -p(x_t) \le w_{x_t x_1}$ yields   $0 \le wgt(C)$, a contradiction.

Conversely,  suppose a weighted digraph $G$ contains no negative cycle.  For each vertex $y \in V(G)$, let $\overline{p}(y)$ be the minimum weight of a walk ending at $y$.  Since $G$ is finite and has no negative cycles, the values $\overline{p}(y)$ are well-defined and we need only consider walks with distinct vertices e.g., paths.    It remains to show that the function $\overline{p}$ is a potential function on $G$.  Consider any arc $(x,y)$ in $G$.  Any minimum weight path ending at $x$ followed by the arc $(x,y)$ creates a path ending at $y$ with weight $\overline{p}(x) + w_{xy}.$  Thus by  the definition of $\overline{p}$ we have $\overline{p}(y) \le \overline{p}(x) + w_{xy}.$
\end{proof}

\subsection{Weighted Posets and  Related Digraphs  }

A \emph{weighted poset}   $(P,f)$ consists of a poset $P= (X,\prec)$ together with a weight function $f$ from $X$ to the non-negative reals.    We are interested in the case in which $P$ is an interval order and we seek an  interval representation of $P$ in which $f(x)$ is the length of the interval assigned to $x$ for each $x \in X$.    We restrict the range of $f$ to be the set $\{0,1\}$ in Section~\ref{0-1-sec} and $\{1,2\}$ in Section~\ref{1-2-sec}.
 Given a weighted  poset $(P,f)$, we construct a weighted digraph $G(P,f)$    in Definition~\ref{GPf-def}  and show in Proposition~\ref{potential-prop} that it has  the following property:    $P$ has an interval representation ${\mathcal I} = \{I_x: x \in X\}$  in which $|I_x| = f(x)$ for all $x \in X$ if and only if $G(P,f)$ has no negative cycles.    
We choose the value of  $\epsilon$  appearing as a weight in $G(P,f)$  so that $0 < \epsilon < \frac{1}{|X|^2}$.   We also define the closely related digraph $G'(P,f)$, which eliminates the constant $\epsilon$ and simplifies our arguments.
  
  \begin{Def} {\rm
 Let $(P,f)$ be a weighted poset and $P=(X,\prec)$.   Define $G(P,f)$ to be the weighted digraph with vertex set $X$ and the following arcs.
 
\begin{itemize}
 \item $(a,b )$ with weight $w_{ab} = -f(b) - \epsilon$ for all $a,b \in X$ with $a \succ b$.

  \item $(a,b)$ with weight $w_{ab} = f(a)$  and $(b,a)$ with weight $w_{ba} = f(b)$ for all distinct $a,b \in X$ with $a \parallel b$.
\end{itemize}
The digraph $G'(P,f)$ is identical to $G(P,f)$ except that for $a \succ b$, the arc $(a,b)$ has weight $w'_{ab} = -f(b)$.    }
 \label{GPf-def}
 \end{Def}

We classify   arc  $(a,b)$ of $G(P,f)$  or  $G'(P,f)$ as $(-)$ if  $a \succ b$ (even when $f(b) = 0$) and  as $(+)$ if $a \parallel b$ (even when $f(a)=0)$).  Likewise, we classify paths in $G(P,f)$ and $G'(P,f)$  by their arc types, for example the path 
$a \xrightarrow{-f(b)- \epsilon}b\xrightarrow{f(b)}c \xrightarrow{f(c)}d$ in $G(P,f)$, which corresponds to $a \succ b \parallel c \parallel d$ in $P$, would be classified as $(-,+,+)$.     We sometimes find it convenient to specify the start and end of a path, and if a path $S$ starts at  point $a$ and ends at point $b$ we may write it as ${_aS_b}$ or simply as $S$.
Likewise, we denote the segment of a cycle $C$ that starts at point $a$ and ends at point $b$ by ${_aC_b}$.

 For easy reference, we list the arcs of $G(P,f)$ and $G'(P,f)$  by category.

\begin{center}
\begin{tabular}{|l|c|c|c|c|}
\hline
Type & Arc & Weight in $G(P,f)$ & Weight in $G'(P,f)$ & $x,y$ Relation \\ 
\hline
$(-)$ & $(a,b)$  & $-f(b) -\epsilon $ &  $-f(b)  $ & $a \succ b$ \\
\hline
$(+)$ & $(a,b)$  & $f(a)$  & $f(a)$ & $a \parallel b$ \\
\hline
 $(+)$ & $(b,a)$  & $f(b)$ &  $f(b)$ & $a \parallel b$ \\
\hline
\end{tabular}
\end{center}

 The following proposition uses Definition~\ref{GPf-def} and the definition of $\epsilon$ to show that a cycle has negative weight in $G(P,f)$ if and only if it contains at least one  $(-)$ arc and  has weight at most 0 in $G'(P,f)$.
 
 \begin{Prop}  Let   $P= (X,\prec)$  be a poset and  $f$  be a weight function from $X$ to the non-negative reals.   
 Digraph $G(P,f)$ has a negative weight cycle if and only if $G'(P,f)$ has a cycle with at least one $(-)$ arc and with weight at most 0.	 
 \label{G'-rem}
 \end{Prop}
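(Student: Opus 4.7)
The plan is to exploit the fact that $G(P,f)$ and $G'(P,f)$ have identical vertex and arc sets, and differ only in that each $(-)$ arc has weight smaller by exactly $\epsilon$ in $G(P,f)$ than in $G'(P,f)$. Hence if $C$ is any cycle in the shared digraph and $k$ denotes the number of $(-)$ arcs of $C$, then
\[
wgt_{G(P,f)}(C) \;=\; wgt_{G'(P,f)}(C) \;-\; k\epsilon.
\]
Both directions of the proposition will then follow from this single identity together with a bound on $k\epsilon$.

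For the forward direction, suppose $C$ is a negative cycle in $G(P,f)$. First I would observe that $k \geq 1$: if $k=0$, every arc of $C$ has type $(+)$ with non-negative weight $f(a)$, forcing $wgt_{G(P,f)}(C) \geq 0$, a contradiction. Since $C$ visits distinct vertices, $k \leq |X|$, and together with $\epsilon < 1/|X|^2$ this gives $k\epsilon < 1/|X| \leq 1$. Therefore $wgt_{G'(P,f)}(C) = wgt_{G(P,f)}(C) + k\epsilon < k\epsilon < 1$. Since $f$ takes integer values in the applications of this paper (namely $\{0,1\}$ in Section~\ref{0-1-sec} and $\{1,2\}$ in Section~\ref{1-2-sec}), every arc weight in $G'(P,f)$ is an integer, hence so is $wgt_{G'(P,f)}(C)$; combined with $wgt_{G'(P,f)}(C) < 1$ this yields $wgt_{G'(P,f)}(C) \leq 0$, as needed.

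The reverse direction is a direct computation: if $C$ is a cycle in $G'(P,f)$ containing at least one $(-)$ arc and satisfying $wgt_{G'(P,f)}(C) \leq 0$, then $k \geq 1$ and
\[
wgt_{G(P,f)}(C) \;=\; wgt_{G'(P,f)}(C) - k\epsilon \;\leq\; -\epsilon \;<\; 0,
\]
so $C$ is a negative cycle in $G(P,f)$.

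The only delicate step is the integrality argument in the forward direction, where one must bridge the gap between the strict inequality $wgt_{G'(P,f)}(C) < 1$ and the desired conclusion $wgt_{G'(P,f)}(C) \leq 0$. This is precisely why $\epsilon$ is chosen so small relative to $|X|$: the bound $k\epsilon < 1$ combined with the integer values of $f$ in the settings of interest closes the gap with no room to spare.
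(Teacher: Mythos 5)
Your proof is correct and takes essentially the same route as the paper's: the reverse direction is the same direct computation, and the forward direction likewise uses non-negativity of $(+)$ arcs to force a $(-)$ arc, the choice of $\epsilon$ to get $k\epsilon<1$, and integrality of the weights in $G'(P,f)$ to pass from strictly less than $1$ to at most $0$. Your explicit caveat about $f$ being integer-valued is the same assumption the paper makes implicitly when it writes $wgt(C)=-M-k\epsilon$ with $M$ an integer, so flagging it is reasonable rather than a defect.
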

 
 \begin{proof}
 If $C$ is a cycle in $G'(P,f)$ that has at least one $(-)$ arc and weight at most 0, then by Definition~\ref{GPf-def}, cycle $C$ has negative weight in $G(P,f)$. 
 
 Conversely, let  $C$ be a negative weight cycle in $G(P,f)$.  Since $(+)$ arcs have weight at least 0, we know that $C$ contains a $(-)$ arc.   By construction, the digraph $G(P,f)$ has fewer than $|X|^2$ arcs, thus $C$ has fewer than $|X|^2$ arcs.    By our choice of $\epsilon$, we can write $wgt(C)$ in $G(P,f)$ as $-M-k\epsilon$ (where $M$ is a non-negative integer and $k\epsilon < 1$).  Then $wgt(C)$ in $G'(P,f)$ is  $-M$, which is at most 0 because $-M-k\epsilon< 0$.
 \end{proof}

 



  The next proposition shows the utility of the digraph $G(P,f)$ in determining whether poset $P$ has an interval representation in which for all points $x$ the length of the interval assigned to $x$ is $f(x)$.

\begin{Prop}  Let $P= (X,\prec)$ be a poset and $f$ a function from $X$ to the non-negative real numbers.  
Poset  
$P$ has an interval representation ${\mathcal I} = \{I_x: x \in X\}$  in which $|I_x| = f(x)$ for all $x \in X$ if and only if $G(P,f)$ has no negative cycles.    
\label{potential-prop}
\end{Prop}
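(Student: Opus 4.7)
The plan is to apply Theorem~\ref{potential-no-neg} to translate ``$G(P,f)$ has no negative cycle'' into the existence of a potential function, and to identify that potential with the left endpoints of the desired intervals.

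For the sufficiency direction, suppose $G(P,f)$ has no negative cycle. Theorem~\ref{potential-no-neg} yields a potential $p : X \to \mathbb{R}$, and I would set $I_x = [p(x), p(x)+f(x)]$, so $|I_x| = f(x)$ automatically. To check that this is a valid interval representation of $P$, I would verify the ``iff'' pair by pair. When $y \succ x$, the $(-)$-arc $(y,x)$ of weight $-f(x)-\epsilon$ forces $p(x) - p(y) \le -f(x) - \epsilon$, hence $R_x < L_y$, so $I_x$ lies strictly left of $I_y$, and the case $x \succ y$ is symmetric. When $x \parallel y$, the two $(+)$-arcs of weights $f(x)$ and $f(y)$ yield $L_y \le R_x$ and $L_x \le R_y$, so neither interval is strictly left of the other. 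Observe that the $\epsilon$ slack in the $(-)$-arc weight is exactly what upgrades the non-strict potential inequality into the strict separation $R_x < L_y$.

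For the necessity direction, let ${\mathcal I} = \{I_x\}$ be any interval representation with $|I_x| = f(x)$, and write $L_x$ for the left endpoint of $I_x$. The natural candidate for a potential on $G'(P,f)$ is $p(x) = L_x$. A direct check shows that $L_{(\cdot)}$ satisfies every $(+)$-arc inequality (because incomparable intervals are not completely separated) and satisfies every $(-)$-arc inequality \emph{strictly} (because $x \prec y$ forces $R_x < L_y$). Summing the inequalities around any cycle $C$ of $G'(P,f)$ then gives $0 \le wgt(C)$, with strict inequality $0 < wgt(C)$ whenever $C$ contains at least one $(-)$-arc. Hence $G'(P,f)$ has no cycle that combines a $(-)$-arc with weight at most $0$, and Proposition~\ref{G'-rem} yields that $G(P,f)$ has no negative cycle.

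The main obstacle, and the reason the construction uses both the $\epsilon$ in $G(P,f)$ and the companion digraph $G'(P,f)$, is the mismatch between the strict inequality $R_x < L_y$ defining the poset relation and the non-strict inequalities arising from potential functions. The $\epsilon$ term provides just enough slack to recover strictness when building a representation from a potential, while routing the reverse direction through $G'(P,f)$ and Proposition~\ref{G'-rem} cleanly sidesteps any need to compare $\epsilon$ with the actual gaps in a given representation.
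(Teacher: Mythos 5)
Your proof is correct, and one half of it diverges from the paper in a way worth noting. For sufficiency (no negative cycle implies a representation) you do exactly what the paper does: take the potential from Theorem~\ref{potential-no-neg} as the left endpoints and set $I_x=[p(x),p(x)+f(x)]$; the paper leaves the pairwise verification as ``one can check,'' whereas you carry it out, correctly observing that the $\epsilon$ slack on $(-)$ arcs is what restores the strict separation $R_x<L_y$. For necessity, the paper works directly in $G(P,f)$: it re-chooses $\epsilon$ smaller than the minimum gap between distinct endpoints of the given representation, shows the left-endpoint function $L$ is then a potential on $G(P,f)$, and concludes via Theorem~\ref{potential-no-neg}. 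You instead avoid touching $\epsilon$ at all: you show $L$ satisfies every arc inequality of $G'(P,f)$, strictly on $(-)$ arcs, sum around a cycle to get $wgt(C)>0$ whenever $C$ contains a $(-)$ arc, and then invoke Proposition~\ref{G'-rem}. The paper's route is shorter but quietly relies on the fact that whether $G(P,f)$ has a negative cycle does not depend on which admissible $\epsilon$ was fixed when the digraph was defined (or on rescaling the representation) --- a point it does not spell out; your detour through $G'(P,f)$ makes that issue moot, at the modest cost of using Proposition~\ref{G'-rem} in this direction as well. Both arguments are sound, and your version is arguably the cleaner way to handle the strict-versus-nonstrict inequality mismatch you identify at the end.
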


\begin{proof}
\noindent ($\Longrightarrow$)
Fix an interval representation of $P$ in which for each $x \in X$, the interval assigned to $x$ has left endpoint $L(x)$ and length $f(x)$.  Thus the interval assigned to $x$ is  $[L(x),R(x)]$ where $R(x) = L(x) + f(x)$.  Choose $\epsilon$ to be a positive real number less than the smallest distance between distinct endpoints in this representation and  also less than $|X|^2$.  Let $G(P,f)$ be  the resulting weighted digraph.           We show $L(b) - L(a) \le w_{ab}$ for each arc   $(a,b)$  of $G(P,f)$.

If $a \succ b$ then $R(b) < L(a)$ so by our choice of $\epsilon$ we have, $R(b) \le L(a) - \epsilon$.  Then $L(b) + f(b) = R(b)  \le L(a) - \epsilon$ and thus $L(b) - L(a) \le -f(b) - \epsilon = w_{ab}$.    If $a \parallel b$ then $R(a) \ge L(b)$ so $L(a) + f(a)  \ge L(b)$ or equivalently $L(b) - L(a) \le f(a) = w_{ab}$.  Thus $L$ is a potential function for $G(P,f)$.  By Theorem~\ref{potential-no-neg}, digraph
$G(P,f)$ has no negative cycles.    

\medskip
\noindent ($\Longleftarrow$)
Conversely, suppose  $G(P,f)$ has no negative cycles.  By Theorem~\ref{potential-no-neg}, the digraph $G(P,f)$ has a potential function, call it $L$.    For each $x \in X$, let $I_x = [L(x), L(x) + f(x)]$ and note that $I_x$ is indeed an interval since $f(x) \ge 0$.    Using the definitions of $G(P,f)$ and of potential functions, one can check that the set of intervals $\{I_x: x \in X\}$ gives an interval representation of $P$ in which $|I_x| = f(x)$ for each $x$.
\end{proof}

\subsection{The Minimality Hypothesis}

In the next sections our proofs will  involve a cycle $C$ in    digraph $G'(P,f)$  that satisfies a minimality condition.  The next definition makes this precise.

\begin{Def} {\rm 
 Let $P$ be an interval order with $P = (X,\prec)$ and let $f:X \to \{0, 1,2, \ldots\}$ be a weight function.    We say that   cycle $C$ in  $G'(P,f)$ satisfies the \emph{minimality hypothesis} for $(P,f)$ if  $wgt(C) \le 0$, $C$ contains at least one $(-)$ arc, and $C$   has the  minimum number of arcs among such cycles.  }
 \label{min-hyp-def}
\end{Def}

We end this section with a lemma  that establishes properties of cycles that satisfy the minimality hypothesis for $(P,f)$.

\begin{lemma}
 Let $r$ be a positive integer.
If $C$ satisfies the minimality hypothesis for $(P,f)$ and $f(x) \le r$ for each  point $x$ of $P$
then $wgt(C) \ge  1-r$. 
\label{wgt-C-lem}
\end{lemma}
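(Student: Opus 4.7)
My plan is to argue by contradiction, assuming $wgt(C) < 1-r$. Since $f$ is integer-valued and $G'(P,f)$ contains no $\epsilon$-offsets, every arc weight, and hence every cycle weight, in $G'(P,f)$ is an integer, so the assumption becomes $wgt(C) \le -r$. Write $C = v_0 v_1 \cdots v_{t-1} v_0$ with indices modulo $t$. A preliminary observation is that $t \ge 3$: any 2-cycle in $G'(P,f)$ consists of the two $(+)$ arcs between an incomparable pair, which contains no $(-)$ arc and so cannot satisfy the minimality hypothesis. Call $v_i$ a \emph{peak} when its in-arc $(v_{i-1}, v_i)$ and out-arc $(v_i, v_{i+1})$ are both of type $(-)$.

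The core step is to show $C$ has no peak. If $v_i$ were a peak, then $v_{i-1} \succ v_i \succ v_{i+1}$ gives $v_{i-1} \succ v_{i+1}$ by transitivity, so the $(-)$ chord $(v_{i-1}, v_{i+1})$ of weight $-f(v_{i+1})$ lies in $G'(P,f)$. Splicing this chord in place of the two arcs at $v_i$ produces a shorter cycle $C'$ of weight $wgt(C) + f(v_i)$, which is at most $-r + r = 0$ by our hypothesis together with the bound $f(v_i) \le r$. For $t \ge 4$, the cycle $C'$ has $t-1 \ge 3$ arcs and still contains a $(-)$ arc (namely the chord), contradicting the minimality of $C$. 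The delicate case is $t = 3$: here the splice would need to be a 2-cycle, which would require the arc $(v_{i+1}, v_{i-1})$ to lie in $G'(P,f)$; but because $v_{i-1} \succ v_{i+1}$ this arc is absent, and in fact the third arc of the original cycle $C$ itself cannot exist, a contradiction. In either case no peak can appear.

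To finish, I will expand $wgt(C)$ by bookkeeping the contribution of each $f(v_j)$: the in-arc of $v_j$ contributes $-1$ to the coefficient of $f(v_j)$ exactly when it is of type $(-)$, while the out-arc contributes $+1$ exactly when it is of type $(+)$. A quick case check on the four combinations of (in-type, out-type) shows the total coefficient of $f(v_j)$ is $-1$ at a peak, $+1$ when both incident arcs are $(+)$, and $0$ otherwise. Since no peak exists, every coefficient is non-negative and $wgt(C) \ge 0$, contradicting $wgt(C) \le -r \le -1$. I expect the main obstacle to be the $t = 3$ boundary case, where the shortcut does not itself produce a valid cycle in $G'(P,f)$ and the absence of peaks must instead be read off directly from the structure of the three arcs.
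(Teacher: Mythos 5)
Your proof is correct and uses the same two ingredients as the paper's argument: integrality reduces the assumption to $wgt(C)\le -r$, a consecutive pair of $(-)$ arcs is shortcut via transitivity to a shorter cycle of weight at most $-r+f(v_i)\le 0$ contradicting minimality, and a cycle with no such pair has non-negative weight. You merely run these steps in the opposite order (first ruling out ``peaks,'' then the coefficient count giving $wgt(C)\ge 0$) and add explicit care for the $t=3$ case, which the paper leaves implicit since such a cycle cannot exist in $G'(P,f)$.
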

 
 \begin{proof}
 The arc weights of $G'(P,f)$ are integers,  thus we may suppose for a contradiction that $wgt(C) \le -r$.       Since  $C$ contains a $(-)$ arc and any path in $G'(P,f)$ of the form $(-,+)$ has weight $0$, $C$ must contain a segment of the form $(-,-)$.  Thus there exist vertices $a,b,c$ so that $S: a \xrightarrow{ -f(b)}b\xrightarrow{-f(c) }c$ is  a segment of $C$.  By the definition of $G'(P,f)$ we have $a \succ b \succ c$ in $P$ and thus we can replace $S$ by  the $(-)$ arc $a \xrightarrow{ -f(c)}c$ to obtain a shorter cycle $C'$ in $G'(P,f)$ with $wgt(C') = wgt(C) + f(b) \le -r +r \le 0$.  This contradicts the minimality of $C$.
\end{proof}

\section{Interval Orders Representable with Lengths 0 or 1}
\label{0-1-sec}
 
 We say that a poset has a \emph{$\{0,1\}$-interval representation} if it has an interval representation in which each interval  has length either 0 or 1. We use weighted digraphs to characterize this class and our forbidden poset characterization contains just four posets.     Rautenbach and Szwarcfiter \cite{RaSz12} have characterized the analogous class of interval graphs, however the  characterization  in the graph setting is more complicated. We can derive our characterization from the graph version, however, that derivation is more involved than a direct order-based proof.
    
A \emph{simplicial} vertex in a graph is one whose neighbor set forms a clique.   An \emph{antichain} in a poset is a set of points for which every pair is incomparable.  For example, the set $\{d,b,x\}$ is an antichain in all four posets shown in Figure~\ref{forb-fig}.
 We introduce the term \emph{co-simplicial} in Definition~\ref{co-simp-def} so that 
a point $v$ is co-simplicial in poset $P$ if and only if $v$ is simplicial in  the incomparability graph  of $P$.  
\begin{Def}{\rm
A point in a poset is \emph{co-simplicial} if its incomparability set is an antichain.
}
\label{co-simp-def}
\end{Def}
In Figure~\ref{forb-fig}, the point $d$ is co-simplicial in the first and last poset shown and not co-simplicial in the middle two posets.  
  In the next lemma, we will show that  in any $\{0,1\}$-interval representation of a poset, points that are not co-simplicial \emph{must} be assigned intervals of length 1.  Furthermore, if poset $P$ has a $\{0,1\}$-interval representation,   co-simplicial points \emph{can} be assigned intervals of length 0.

\begin{lemma}
If a poset $P$ has a $\{0,1\}$-interval representation then $P$ has a $\{0,1\}$-representation in which $|I_x| = 0$ for each co-simplicial point $x$ and $|I_x|= 1$ for each point $x$ that is not co-simplicial.
\label{length-lem}
\end{lemma}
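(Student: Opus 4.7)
The plan is to prove the lemma in two stages. First, I show that in any $\{0,1\}$-representation, non-co-simplicial points are already forced to have length $1$. If $x$ is not co-simplicial, then $Inc(x)$ contains a comparable pair $y \prec z$; both $I_y$ and $I_z$ meet $I_x$, yet $I_y$ lies strictly to the left of $I_z$. Hence $I_x$ contains a point of $I_y$ and a point of $I_z$, forcing $|I_x|>0$, and since the only allowed lengths are $0$ and $1$, we conclude $|I_x|=1$. So the intervals of non-co-simplicial points need not be touched.

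Second, I modify the intervals of co-simplicial points, collapsing each to a single point. For each co-simplicial $x$, let $J_x := I_x \cap \bigcap_{y \in Inc(x)} I_y$. The set $\{x\} \cup Inc(x)$ consists of pairwise-incomparable elements, since $Inc(x)$ is an antichain and $x$ is incomparable to each of its members; so the intervals indexed by this set are pairwise intersecting, and by the Helly property for intervals on the real line $J_x$ is a non-empty closed interval. Define $p_x$ to be the left endpoint of $J_x$, and set $I'_x := \{p_x\}$ for each co-simplicial $x$ and $I'_x := I_x$ otherwise.

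The crux is to show that $\{I'_x\}_{x \in X}$ is still an interval representation of $P$, which comes down to one nontrivial point: if $x, y$ are both co-simplicial with $x \parallel y$, then $p_x = p_y$, so the singletons $\{p_x\}$ and $\{p_y\}$ do meet. To see this, I check $\{x\} \cup Inc(x) = \{y\} \cup Inc(y)$ using that $y \in Inc(x)$ and $Inc(x)$ is an antichain: any $z \in Inc(x) \setminus \{y\}$ satisfies $z \parallel y$, so $z \in Inc(y)$, giving one inclusion; the other follows by symmetry. Therefore $J_x = J_y$ and $p_x = p_y$. All other cases reduce to routine verifications: for $x \prec y$, the point $p_x \in I_x$ lies strictly to the left of $I_y$ and of $p_y$ (when $y$ is also co-simplicial); for $x \parallel y$ with $x$ co-simplicial and $y$ not, we have $y \in Inc(x)$, so $p_x \in I_y$, meaning $\{p_x\}$ meets $I'_y = I_y$.

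The main obstacle I expect is exactly the identification of the right object to track, namely $J_x$, together with the verification that its defining set is constant along the incomparability relation on co-simplicial points; once this invariance is established, the Helly property and a short case analysis finish the argument.
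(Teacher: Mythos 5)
Your proof is correct and follows essentially the same route as the paper: both first argue that non-co-simplicial points are forced to receive length-$1$ intervals, and then apply the Helly property to the pairwise-intersecting intervals of $\{x\}\cup Inc(x)$ to collapse each co-simplicial interval to a point of the common intersection. The only difference is organizational: you perform all collapses simultaneously, which is why you need the extra (correct) verification that incomparable co-simplicial points $x,y$ satisfy $\{x\}\cup Inc(x)=\{y\}\cup Inc(y)$ and hence receive the same collapse point, whereas the paper contracts one co-simplicial interval at a time and simply repeats the Helly argument on the updated representation.
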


\begin{proof}
Suppose $y$ is  point of poset $P$ that is not co-simplicial.  Thus there exist points $a,b \in Inc(y)$ for which $a \prec b$.  In any interval representation of $P$, the interval $I_a$ assigned to $a$ lies completely to the left of the interval $I_b$ assigned to $b$, and the interval $I_y$ assigned to $y$ must intersect both $I_a$ and $I_b$.   Thus $|I_y| \neq 0$.  This proves that in any $\{0,1\}$-interval representation of a poset, points that are not co-simplicial \emph{must} be assigned intervals of length 1.

  Now fix a $\{0,1\}$-interval representation of $P$ and let $x$ be a co-simplicial point of $P$.  By Definition~\ref{co-simp-def}, $Inc(x)$ is an antichain.  Thus for all $u,v \in Inc(x)$, we have
 $I_u \cap I_v \neq \emptyset$ and by the Helly property of intervals, 
 $\bigcap\limits_{v\in Inc(x)} I_v \neq \emptyset$. Hence $I_x$ can be contracted to a single point in this intersection.   Repeat this argument for each co-simplicial point of $P$ until each is assigned an interval of length 0.  
\end{proof}

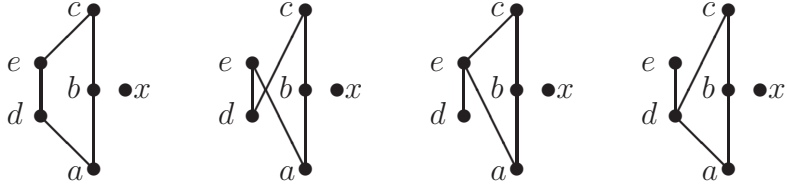
\begin{figure}
\begin{center}
 \begin{picture}(300,65)(0,15)
\thicklines

\put(20,40){\circle*{5}}
\put(20,60){\circle*{5}}

\put(40,20){\circle*{5}}
\put(40,50){\circle*{5}}
\put(40,80){\circle*{5}}
\put(52,50){\circle*{5}}
\put(20,40){\line(0,1){20}}
\put(40,20){\line(0,1){60}}
\put(40,20){\line(-1,1){20}}
\put(20,60){\line(1,1){20}}

\put(7,37){$d$}
\put(7,57){$e$}
\put(30,16){$a$}
\put(30,47){$b$}
\put(30,78){$c$}
\put(55,47){$x$}

\put(100,40){\circle*{5}}
\put(100,60){\circle*{5}}

\put(120,20){\circle*{5}}
\put(120,50){\circle*{5}}
\put(120,80){\circle*{5}}
\put(132,50){\circle*{5}}
\put(100,40){\line(1,2){20}}
\put(120,20){\line(-1,2){20}}
\put(120,20){\line(0,1){60}}
\put(100,40){\line(0,1){20}}

\put(87,37){$d$}
\put(87,57){$e$}
\put(110,16){$a$}
\put(110,47){$b$}
\put(110,78){$c$}
\put(135,47){$x$}

\put(180,40){\circle*{5}}
\put(180,60){\circle*{5}}

\put(200,20){\circle*{5}}
\put(200,50){\circle*{5}}
\put(200,80){\circle*{5}}
\put(212,50){\circle*{5}}
\put(180,60){\line(1,1){20}}
\put(200,20){\line(-1,2){20}}
\put(200,20){\line(0,1){60}}
\put(180,40){\line(0,1){20}}

\put(167,37){$d$}
\put(167,57){$e$}
\put(190,16){$a$}
\put(190,47){$b$}
\put(190,78){$c$}
\put(215,47){$x$}

\put(260,40){\circle*{5}}
\put(260,60){\circle*{5}}

\put(280,20){\circle*{5}}
\put(280,50){\circle*{5}}
\put(280,80){\circle*{5}}
\put(292,50){\circle*{5}}
\put(260,40){\line(1,2){20}}
\put(280,20){\line(-1,1){20}}
\put(280,20){\line(0,1){60}}
\put(260,40){\line(0,1){20}}

\put(247,37){$d$}
\put(247,57){$e$}
\put(270,16){$a$}
\put(270,47){$b$}
\put(270,78){$c$}
\put(295,47){$x$}

  \end{picture}
  \end{center}

\caption{The set ${\cal H}$ of minimal forbidden posets for interval orders with a $\{0,1\}$-representation.}

\label{forb-fig}
 \end{figure}
 
\begin{lemma} Let $P$ be an interval order $(X,\prec)$ and  
 $f: X \to \{0,1\}$ be the function defined by $f(x) = 0$ when $x$ is co-simplicial in $P$ and $f(x) =1$ otherwise.  Let $C$ be a cycle in $G'(P,f)$ satisfying the minimality hypothesis.
If $C$ contains a segment  $S_1: a  \xrightarrow{-f(b)  }b  \xrightarrow{-f(c)   }c$ then $f(b) = 1$, and if $C$ contains a segment  $S_2: b  \xrightarrow{-f(c) }c  \xrightarrow{ +f(c)  }d$ then $f(d) = 1$.
\label{bd-weights-lem}
\end{lemma}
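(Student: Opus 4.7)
My plan is to mirror the argument used in Lemma~\ref{wgt-C-lem}: in each case, suppose for contradiction that the claimed value is $0$, and build a shorter cycle $C'$ of weight at most $0$ still containing a $(-)$ arc, violating the minimality of $C$.

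For segment $S_1 : a \succ b \succ c$, if $f(b) = 0$ then transitivity gives $a \succ c$, so the $(-)$ arc $a \to c$ of weight $-f(c)$ exists in $G'(P,f)$. Its weight exactly matches the weight $-f(b) - f(c) = -f(c)$ of $S_1$. Replacing $S_1$ by this single arc yields $C'$ with $wgt(C') = wgt(C) \le 0$, one fewer arc, and still containing the $(-)$ arc $a \to c$, contradicting the minimality of $C$. This is essentially the shortcut already used inside the proof of Lemma~\ref{wgt-C-lem}.

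For segment $S_2 : b \succ c$, $c \parallel d$, I assume $f(d) = 0$, so that $d$ is co-simplicial and $Inc(d)$ is an antichain. I then do a trichotomy on the order relation between $b$ and $d$. If $b \prec d$, then $c \prec b \prec d$ gives $c \prec d$ by transitivity, contradicting $c \parallel d$. If $b \succ d$, the $(-)$ arc $b \to d$ has weight $-f(d) = 0$, which matches the weight $-f(c) + f(c) = 0$ of $S_2$; shortcutting $S_2$ by this arc again violates minimality. The only remaining possibility is $b \parallel d$, so $b \in Inc(d)$. Together with $c \in Inc(d)$ and the relation $b \succ c$, this contradicts the antichain property of $Inc(d)$, ruling out $f(d) = 0$.

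The main obstacle is bookkeeping: in each shortcut I need to confirm that the replacement arc actually exists in $G'(P,f)$, carries the right weight, and leaves at least one $(-)$ arc in the resulting cycle, so that $C'$ remains a valid competitor to $C$ under the minimality hypothesis. The decisive observation for $S_2$ is that the co-simpliciality assumption $f(d) = 0$ forces $b \parallel d$ through the shortcut argument, and then the antichain $Inc(d)$ already contains the comparable pair $\{b,c\}$, giving the contradiction without any further case analysis on the neighbors of $d$ in $C$.
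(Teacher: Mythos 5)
Your proof is correct and follows essentially the same route as the paper: the same shortcut replacement $a \to c$ for $S_1$, and for $S_2$ the same trichotomy on the relation between $b$ and $d$, eliminating $b \prec d$ by transitivity and $b \succ d$ by a shortcut, then concluding from $b,c \in Inc(d)$ with $b \succ c$ that $d$ cannot be co-simplicial. The only difference is cosmetic (you phrase the $S_2$ case as a contradiction from $f(d)=0$ rather than directly concluding $d$ is not co-simplicial), so nothing further is needed.
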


\begin{proof}
First suppose that  $C$ contains the segment $S_1$,  and thus $a \succ b \succ c$ in $P$.  If $f(b) \neq 1$ we would have $f(b) = 0$ and could replace $S_1$ by segment $a  \xrightarrow{-f(c)  }c$ to obtain a shorter cycle that has a $(-)$ arc and  with the same weight as $C$, a contradiction.   

Next suppose that $C$ contains the segment $S_2$.  Thus in $P$ we have $b \succ c $ and $c \parallel d$.
If $b \prec d$ we get $c \prec b \prec d  $, contradicting $c \parallel d$.   If $b \succ d$, we can replace the segment $S_2$ by segment $b  \xrightarrow{-f(d)  }d$ to obtain a shorter cycle with   weight at most 0.  Hence,    
 $b \parallel d$ and thus $b, c \in Inc(d)$.  In this case, $d$ is not co-simplicial, so $f(d) = 1$ as desired.  
 \end{proof}

\begin{Thm}
Let $P$ be an interval order and define function $f$  by $f(x) = 0$ when $x$ is co-simplicial in $P$ and $f(x) =1$ otherwise.
The following are equivalent.
\begin{enumerate}
\item  $P$ has a $\{0,1\}$-interval representation.
\item  For every $\threeone$ induced in $P$, the middle element of the chain is co-simplicial.
\item  Digraph $G(P,f)$ has no negative cycles. 

\item  Every cycle in digraph $G'(P,f)$  with at least one $(-)$ arc has positive weight.

\item  $P$ does not contain any induced poset from the set $\cal H$ (shown in Figure~\ref{forb-fig}).
\end{enumerate}
\label{zero-one-thm}
\end{Thm}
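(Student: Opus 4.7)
The plan is to establish the five equivalences using the scheme $(1)\Leftrightarrow (3)\Leftrightarrow (4)$, $(2)\Leftrightarrow (4)$, $(2)\Leftrightarrow (5)$, with $(2)\Rightarrow (4)$ as the technical heart.

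The easy equivalences come first. $(3)\Leftrightarrow (4)$ is immediate from Proposition~\ref{G'-rem} together with the integrality of the arc weights in $G'(P,f)$, which collapses ``weight $\leq 0$'' and ``weight not positive'' into the same condition. For $(1)\Leftrightarrow (3)$, Proposition~\ref{potential-prop} provides the equivalence of $(3)$ with the existence of an interval representation realizing the specific $f$ of the theorem; Lemma~\ref{length-lem} lets us upgrade any $\{0,1\}$-representation of $P$ into one matching this $f$, giving the remaining direction. For $(1)\Rightarrow (2)$, given any $\{0,1\}$-representation and an induced $\threeone$ on $\{a\prec b\prec c, x\}$, the chain relations $R(a)<L(b)$ and $R(b)<L(c)$ together with $I_x$ meeting both $I_a$ and $I_c$ force $L(x)<L(b)$ and $R(x)>R(b)$, hence $|I_x|>|I_b|$; if $b$ were not co-simplicial, its incomparability set would contain a comparable pair, forcing $|I_b|>0$, hence $|I_b|=1$ and $|I_x|>1$, impossible. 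For $(4)\Rightarrow (2)$, any $\threeone$ violating $(2)$ exhibits the $4$-cycle $x\to c\to b\to a\to x$ in $G'(P,f)$ of weight $f(x)-f(b)\leq 0$ containing the $(-)$ arcs $c\to b$ and $b\to a$, contradicting $(4)$.

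The main obstacle is $(2)\Rightarrow (4)$. Assuming $(2)$, suppose for contradiction that $C$ is a cycle in $G'(P,f)$ satisfying the minimality hypothesis of Definition~\ref{min-hyp-def}. Lemma~\ref{wgt-C-lem} with $r=1$ gives $wgt(C)\geq 0$, so $wgt(C)=0$. I would then analyze the arc pattern of $C$: every $(-,+)$ subpath has weight $0$ and forces $f=1$ at its end by Lemma~\ref{bd-weights-lem}, while every $(-,-)$ subpath has $f=1$ at its middle by the same lemma. Transitivity of $\prec$ and the $\twotwo$-freeness of $P$ (since $P$ is an interval order), combined with the length-minimality of $C$, rule out many shortcuts between non-consecutive vertices of $C$. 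Pushing these constraints, I expect to isolate a short sub-structure of $C$ exhibiting four vertices that form an induced $\threeone$ whose middle element has a comparable pair in its incomparability set, contradicting $(2)$. The delicate case analysis of the minimal cycle is the technical crux.

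Finally, for $(2)\Leftrightarrow (5)$: the direction $\neg(5)\Rightarrow\neg(2)$ is an inspection of the four posets of ${\cal H}$---each contains the $\threeone$ on $\{a\prec b\prec c, x\}$ whose middle element $b$ has the comparable pair $d\prec e$ in $Inc(b)$, so $b$ is not co-simplicial. For $\neg(2)\Rightarrow\neg(5)$, start from a $\threeone$ on $\{a\prec b\prec c, x\}$ with $d,e\in Inc(b)$ and $d\prec e$; the $\twotwo$-freeness of $P$ together with the relations $b\parallel d,e$ forces either $a\prec d$ (which implies $a\prec e$) or $a\parallel d$ with $a\prec e$, and symmetrically forces either $c\succ e$ (which implies $c\succ d$) or $c\parallel e$ with $c\succ d$. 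The four combinations yield on $\{a,b,c,d,e,x\}$ precisely the four posets of ${\cal H}$. A minor subtlety is ensuring $x\parallel d$ and $x\parallel e$; when $x$ is comparable to one of them, further $\twotwo$-avoidance either produces a replacement for $x$ among the existing points of $P$ or permits rearranging roles among the chain to yield a valid six-element induced copy of a poset from ${\cal H}$.
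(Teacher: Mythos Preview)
Your overall scheme is sound and matches the paper's, and your handling of the easy equivalences is correct (your direct $(4)\Rightarrow(2)$ via the four-cycle $x\to c\to b\to a\to x$ is in fact cleaner than the paper's route). There are, however, two places where the proposal has genuine gaps.

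\textbf{The main gap is $(2)\Rightarrow(4)$.} You correctly identify the tools (Lemma~\ref{bd-weights-lem}, minimality, $\twotwo$-freeness) and the goal (find an induced $\threeone$ with non-co-simplicial middle inside the minimal cycle), but ``pushing these constraints, I expect to isolate a short sub-structure'' is not an argument. The paper's proof has a specific structure that your sketch does not capture: first show $C$ has at least two $(-)$ arcs (if it had exactly one, the $(-,+)$ pair has weight $0$, so all remaining $(+)$ arcs have weight $0$, but Lemma~\ref{bd-weights-lem} forces the next vertex to have $f=1$, contradiction); second, show $C$ has no $(+,-,+)$ segment (the outer two vertices cannot satisfy $\prec$ by $\twotwo$-freeness, and $\succ$ or $\parallel$ gives a shortcut violating minimality since a $(-)$ arc survives); hence $C$ contains a $(-,-,+)$ segment $a\to b\to c\to d$. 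Now Lemma~\ref{bd-weights-lem} gives $f(b)=1$ and $f(d)=1$, and minimality forces $a\parallel d$, so $\{a,b,c,d\}$ is a $\threeone$ whose middle element $b$ has $f(b)=1$, i.e.\ is not co-simplicial, contradicting $(2)$. The step you are missing is the $(+,-,+)$ exclusion, which is what funnels the cycle into the $(-,-,+)$ configuration.

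\textbf{A smaller issue in $\neg(2)\Rightarrow\neg(5)$.} Your ``minor subtlety'' about $x\parallel d$ and $x\parallel e$ is not resolved by any replacement or role-rearrangement; those incomparabilities are forced outright. Once you have established $d\prec c$ and $a\prec e$ (which you did), transitivity immediately rules out $x\prec d$ (else $x\prec c$) and $e\prec x$ (else $a\prec x$); the remaining cases $d\prec x$ and $x\prec e$ each yield a $\twotwo$ or a transitivity contradiction depending on whether $a\parallel d$ or $a\prec d$ (respectively $e\parallel c$ or $e\prec c$). So all six points are distinct and the four undetermined relation-pairs reduce to the two binary choices giving the four posets in~$\mathcal{H}$. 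Your proposed fallback mechanism is not needed and would not obviously work as stated.
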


\begin{proof}
\noindent $ (1) \Longrightarrow (2)$.    We are given that $P$ has a $\{0,1\}$-interval representation, and by Lemma~\ref{length-lem},  we may fix a $\{0,1\}$-interval representation $\mathcal I$ in which points that are not co-simplicial get length 1.  Let $I_v$ be the interval assigned to point $v$ in $\mathcal I$.  For a contradiction, assume that the $\threeone$ $(a \prec b \prec c) \parallel x$ is induced in $P$ and that $b$ is not co-simplicial.  Hence $|I_b| = 1$.  Since $a \prec b \prec c$, the intervals $I_a$, $I_b$, and $I_c$ are disjoint with $I_b$ between $I_a$ and $I_c$.    However,    $x \parallel a$ and $x \parallel c$  so $|I_x| > |I_b| = 1$,  contradiction.
\medskip

\noindent $ (2) \Longrightarrow (5)$.    We prove the contrapositive.  In each of the posets in $\cal H$, the elements $a,b,c,x$ induce a $\threeone$ and the middle element $b$ of the chain $a \prec b \prec c$ is  not co-simplicial.
\medskip

\noindent $ (5) \Longrightarrow (2)$.    We  again prove the contrapositive.  Suppose there exists $\threeone$ $(a \prec b \prec c) \parallel x$ induced in $P$ for which $b$ is not  co-simplicial.  By Definition~\ref{co-simp-def}, there exist points  $d,e \in Inc(b)$ for which $d \prec e$.  If $x=d$ then the elements $(b \prec c) \parallel (x \prec e)$ induce a $\twotwo$ in $P$, a contradiction since $P$ is an interval order.  Similarly, $x=e$ leads to a contradiction.  Thus $a,b,c,d,e,x$ are six distinct elements of $P$.  We must have $d \prec c$,  for otherwise $(d \prec e) \parallel (b \prec c)$ form an induced $\twotwo$.  Similarly, we must have $a \prec e$.  If $x \prec e$ the elements $(x \prec e) \parallel (b \prec c)$ form a $\twotwo$, and similarly if $d \prec x$, the elements $(d \prec x) \parallel (a \prec b)$ form a $\twotwo$,  both leading to contradictions.  If $e \prec x$ then $a \prec e \prec x$, contradicting $a \parallel x$, and similarly $x \prec d$ leads to a contradiction.   Thus $d \parallel x$ and $e \parallel x$.    There are only two relations that are not determined:  that between $e$ and $c$ (namely $e \prec c$ or $e \parallel c$) and that between $d$ and $a$ (namely $a \prec d$ or $a \parallel d$).  The four possible combinations of these relations lead to the four posets in $\cal H$, so one of the posets in $\cal H$ is induced in $P$, a contradiction.

\medskip

\noindent $ (3) \Longrightarrow (1)$.
This follows immediately from Proposition~\ref{potential-prop}.


\medskip

\noindent $ (4) \Longrightarrow (3)$.
The contrapositive follows immediately from Proposition~\ref{G'-rem}.

\medskip
\noindent $ (2) \Longrightarrow (4)$.
We are given that for every $\threeone$ induced in $P$, the middle element of the chain is co-simplicial and we wish to show that  every cycle of $G'(P,f)$  with at least one $(-)$ arc has positive weight.  For a contradiction, assume that $G'(P,f)$ has a    cycle with at least one $(-)$ arc and  weight at most 0, and let $C$ be such a cycle with a minimum number of arcs.  Thus $C$ satisfies the minimality hypothesis (Definition~\ref{min-hyp-def}) for $G'(P,f)$.

First we show that  $C$ has at least two arcs of type $(-)$. 
Suppose $C$ has just one arc  $(a,b)$  of type $(-)$ and consider the segment $S: a \xrightarrow{-f(b) }b\xrightarrow{+f(b)}c $ of $C$ with $wgt(S) = 0$.  Since $wgt(C) \le 0$ and  $f(x) \in  \{0,1\}$ for each $x \in X$, the remaining arcs each have weight 0 and $wgt(C) = 0$.   By Lemma~\ref{bd-weights-lem},
$f(c) = 1$.  Then the arc on $C$ leaving $c$ has weight $f(c)$ with $f(c) = 1 > 0$, a contradiction.   Thus $C$ has at least two arcs of type $(-)$.
   
   We next show that $C$ does not contain a segment of type $(+,-,+)$.  Suppose $C$ contains 
    a segment $S_1: a \xrightarrow{ +f(a)}b\xrightarrow{-f(c) }c  \xrightarrow{+f(c)}d$.  By definition of $G'(P,f)$ we have $a \parallel b$, $b \succ c$, and $c \parallel d$.    If $a \prec d$, the elements $a,b,c,d$ induce a $\twotwo$ in $P$ a contradiction since $P$ is an interval order.    Thus either $a \succ d$ or $a \parallel d$.  In these cases, we can  replace the segment $S_1$  by $a \xrightarrow{  }d   $ to obtain a shorter cycle $C'$   with  $wgt(C') \le wgt(C) \le 0$.  Since $C$ contains at least two $(-)$ arcs, $C'$ still contains a $(-)$ arc,  contradicting the minimality of $C$.    
   
   Since $C$ contains at least two $(-)$ arcs, one $(+)$ arc, and no segment of the form $(+,-,+)$, cycle $C$ must contain a segment of type   $(-,-,+)$.  Let   $S_2: a \xrightarrow{ -f(b)   }b\xrightarrow{-f(c) }c  \xrightarrow{f(c)}d$ be such a segment.  By Lemma~\ref{bd-weights-lem}, we have $f(b) = 1$ and $f(d) = 1$.    
 If $ d \succ a $ we get $ d \succ a \succ b \succ c $, contradicting $c \parallel d$.  If $a \succ d$ we can replace $S_2$ by the $(-)$ arc $a \xrightarrow{ -f(d)  }d$ to obtain a shorter cycle with negative weight, a contradiction.  Hence, $a \parallel d$ and the points $a,b,c,d$ induce a $\threeone$ in $P$.  By the hypothesis,    the middle element of the chain, $b$, is co-simplicial and  by the definition of $f$ we have $f(b) =0$.  This contradicts our earlier conclusion that $f(b) = 1$.
\end{proof}


We close this section by briefly describing how to construct a $\{0,1\}$-interval representation of a poset $P$ or
produce one of the forbidden induced posets algorithmically. We use a standard shortest-paths algorithm such as the Bellman-Ford or the matrix multiplication method on $G(P,f)$ to compute the weight of a minimum-weight path between each pair of vertices or detect a negative cycle.  If there is a negative cycle, these algorithms detect one with a minimum number of arcs, corresponding to a forbidden induced poset from Theorem~\ref{zero-one-thm}.
If there is no negative cycle, we construct a $\{0,1\}$-interval representation of $P$ as described in the proofs of  Proposition~\ref{potential-prop} and Theorem~\ref{potential-no-neg}.
Thus there is a polynomial time certifying algorithm. 


\section{Interval Orders Representable with Lengths 1 or 2}
\label{1-2-sec}
   In this section,  we consider posets that have an interval representation in which the interval lengths are 1 or 2.   Unlike in Section~\ref{0-1-sec}, 
   we are not aware of any result characterizing the analogous class of interval graphs.  We also  do not have an analogue of Lemma~\ref{length-lem},
   which allowed us to determine the length of the interval assigned to each point in the case of $\{0,1\}$-representations.    Instead, we consider weighted posets $(P,f)$ where $P = (X,\prec)$ and $f: X \to \{1,2\}$ is a weight function.   We determine which have interval
      representations ${\mathcal I} = \{I_x: x \in X\}$  in which $|I_x| = f(x)$ for all $x \in X$. 
   

 In Theorem~\ref{main-thm} we characterize this set of weighted posets
  as those with no induced weighted poset in a set  $\cal F$.  The set $\cal F$,  defined formally in Definition~\ref{forb-def},
  consists of the  poset $\threeone$  with  weightings shown in Figure~\ref{three-one-forb},   together with the  four infinite families illustrated  in the case of $t=6$ in Figure~\ref{forb-2-fig}. 

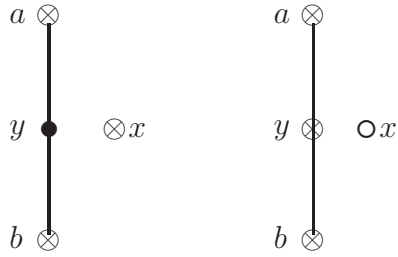
\begin{figure}
\begin{center}

 \begin{picture}(200,90)(0,0)
\thicklines
\put(35,5){$\otimes$}
\put(40,50){\circle*{6}}
\put(60,47){$\otimes$}
\put(35,90){$\otimes$}

\put(25,5){$b$}
\put(25,48){$y$}
\put(70,47){$x$}
\put(25,90){$a$}

\put(40,10){\line(0,1){80}}

\put(135,5){$\otimes$}
\put(135,47){$\otimes$}
\put(160,50){\circle{6}}
\put(135,90){$\otimes$}

\put(125,5){$b$}
\put(125,48){$y$}
\put(165,47){$x$}
\put(125,90){$a$}

\put(140,10){\line(0,1){80}}

  \end{picture}
  \end{center}

\caption{Forbidden weighted posets. Solid circles represent points with weight 2, hollow circles represent points with weight 1, and circles with a cross represent points whose weight may be either 1 or 2.}  

\label{three-one-forb}
 \end{figure}

 \begin{Def} { \rm
 The set $\cal F$ consists of the poset $\threeone$ with weights shown in Figure~\ref{three-one-forb} and 
 four infinite families, ${\cal F}_1$, ${\cal F}_2$,  ${\cal F}_3$,  ${\cal F}_4$.     The posets in each   family ${\cal F}_j$ contain  the points $x_1, x_2, \ldots, x_{t+1}, y_0, y_1, \ldots, y_{t+1}, a, b$ and posets in families ${\cal F}_3$ and ${\cal F}_4$ contain the extra point $x_0$.  The following comparabilities, as well as those implied by transitivity, are present in each family for $i \ge 0$:  \ $b \prec y_0, \  y_i \prec y_{i+1}, \  x_i \prec x_{i+2}, \  x_i \prec y_{i+1}, \  y_i \prec x_{i+2}$.    In each ${\cal F}_j$, $f(a)$ and $f(b)$ may be either 1 or 2.  The remaining weights are 2 with the  exceptions shown in the table below, which also shows additional features specific to each family. 
 
 \bigskip
 
 \noindent
 \begin{tabular}{|c|c|c|c|c|}\hline
 Family & $a=y_{t+1}$?  &  Add'l comparabilities &  $v$ with $f(v) = 1$ &  $x_0$ exists?  \\ \hline
  ${\cal F}_1$ & No & $y_{t+1} \prec a$ &  $y_0$, \ $ y_{t+1}$  &  No  \\ \hline
  ${\cal F}_2$ & Yes &   &  $y_0$, \ $ x_{t+1}$  &  No  \\ \hline
  ${\cal F}_3$ & No & $y_{t+1} \prec a$, \ \  $b \prec x_1$ &  $x_0$, \ $ y_{t+1}$  &  Yes  \\ \hline
  ${\cal F}_4$ & Yes & $b \prec x_1$ &  $x_0$, \ $ x_{t+1}$  &  Yes  \\ \hline
 \end{tabular}
 \medskip
 }
\label{forb-def}
 \end{Def}

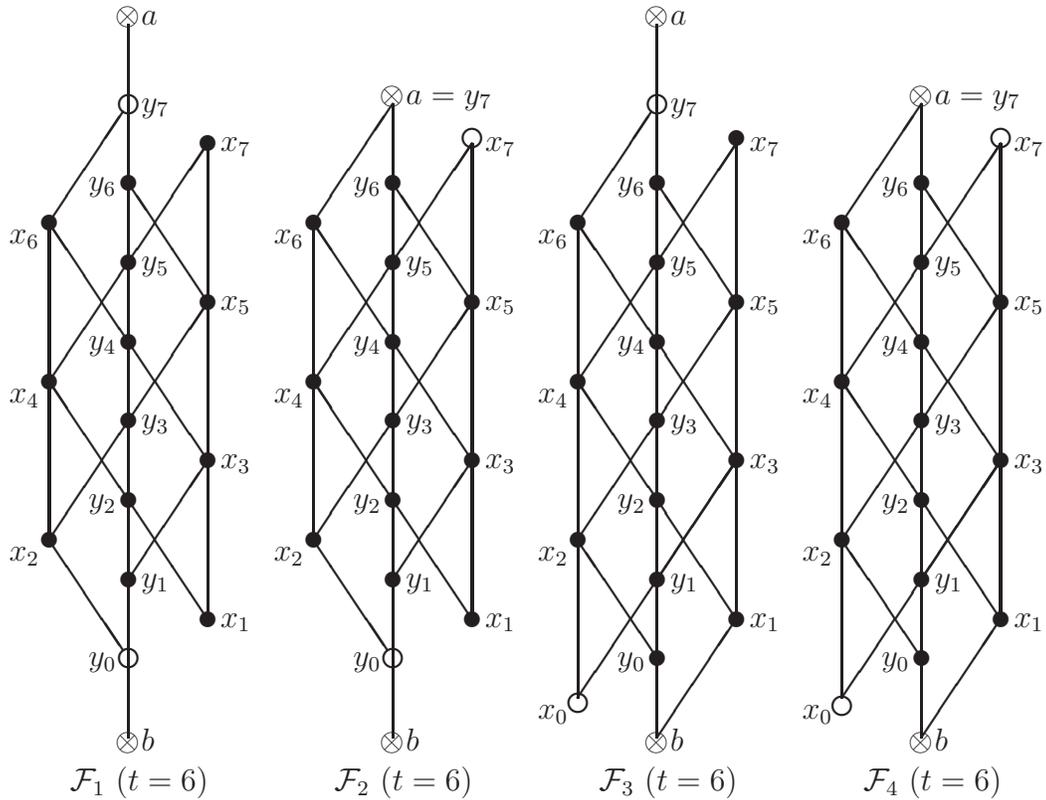
\begin{figure}
\begin{center}
 \begin{picture}(400,300)(0,-10)
\thicklines


\put(20,85){\circle*{6}}
\put(20,145){\circle*{6}}
\put(20,205){\circle*{6}}

\put(45,5){$\otimes$}
\put(50,40){\circle{7}}
\put(50,70){\circle*{6}}
\put(50,100){\circle*{6}}
\put(50,130){\circle*{6}}
\put(50,160){\circle*{6}}
\put(50,190){\circle*{6}}
\put(50,220){\circle*{6}}
\put(50,250){\circle{7}}
\put(45,280){$\otimes$}

\put(80,55){\circle*{6}}
\put(80,115){\circle*{6}}
\put(80,175){\circle*{6}}
\put(80,235){\circle*{6}}

\put(20,85){\line(0,1){120}}
\put(50,10){\line(0,1){270}}
\put(80,55){\line(0,1){180}}
\put(80,55){\line(-2,3){60}}
\put(80,115){\line(-2,3){60}}
\put(80,175){\line(-2,3){30}}
\put(20,85){\line(2,-3){28}}
\put(20,85){\line(2,3){60}}
\put(20,145){\line(2,3){60}}
\put(20,205){\line(2,3){28}}
\put(50,70){\line(2,3){30}}

\put(5,77){$x_2$}
\put(5,137){$x_4$}
\put(5,197){$x_6$}

\put(85,52){$x_1$}
\put(85,112){$x_3$}
\put(85,172){$x_5$}
\put(85,232){$x_7$}

\put(55,5){$b$}
\put(35,37){$y_0$}
\put(55,67){$y_1$}

\put(35,97){$y_2$}
\put(55,127){$y_3$}
\put(35,157){$y_4$}
\put(55,187){$y_5$}
\put(35,217){$y_6$}
\put(55,247){$y_7$}
\put(55,280){$a$}

\put(28,-10){${\cal F}_1$  ($t=6$)}



\put(120,85){\circle*{6}}
\put(120,145){\circle*{6}}
\put(120,205){\circle*{6}}

\put(145,5){$\otimes$}
\put(150,40){\circle{7}}
\put(150,70){\circle*{6}}
\put(150,100){\circle*{6}}
\put(150,130){\circle*{6}}
\put(150,160){\circle*{6}}
\put(150,190){\circle*{6}}
\put(150,220){\circle*{6}}
\put(145,250){$\otimes$}

\put(180,55){\circle*{6}}
\put(180,115){\circle*{6}}
\put(180,175){\circle*{6}}
\put(180,237){\circle{7}}

\put(120,85){\line(0,1){120}}
\put(150,10){\line(0,1){240}}
\put(180,55){\line(0,1){180}}
\put(180,55){\line(-2,3){60}}
\put(180,115){\line(-2,3){60}}
\put(180,175){\line(-2,3){30}}
\put(120,85){\line(2,-3){28}}
\put(120,85){\line(2,3){60}}
\put(120,145){\line(2,3){60}}
\put(120,205){\line(2,3){30}}
\put(150,70){\line(2,3){30}}

\put(105,77){$x_2$}
\put(105,137){$x_4$}
\put(105,197){$x_6$}

\put(185,52){$x_1$}
\put(185,112){$x_3$}
\put(185,172){$x_5$}
\put(185,232){$x_7$}

\put(155,5){$b$}
\put(135,37){$y_0$}
\put(155,67){$y_1$}

\put(135,97){$y_2$}
\put(155,127){$y_3$}
\put(135,157){$y_4$}
\put(155,187){$y_5$}
\put(135,217){$y_6$}
\put(155,250){$a=y_7$}

\put(128,-10){${\cal F}_2$  ($t=6$)}


\put(220,23){\circle{7}}
\put(220,85){\circle*{6}}
\put(220,145){\circle*{6}}
\put(220,205){\circle*{6}}

\put(245,5){$\otimes$}
\put(250,40){\circle*{6}}
\put(250,70){\circle*{6}}
\put(250,100){\circle*{6}}
\put(250,130){\circle*{6}}
\put(250,160){\circle*{6}}
\put(250,190){\circle*{6}}
\put(250,220){\circle*{6}}
\put(250,250){\circle{7}}
\put(245,280){$\otimes$}

\put(280,55){\circle*{6}}
\put(280,115){\circle*{6}}
\put(280,175){\circle*{6}}
\put(280,237){\circle*{6}}

\put(220,25){\line(0,1){180}}
\put(250,10){\line(0,1){270}}
\put(280,55){\line(0,1){180}}
\put(280,55){\line(-2,3){60}}
\put(280,115){\line(-2,3){60}}
\put(280,175){\line(-2,3){30}}
\put(250,40){\line(-2,3){30}}
\put(220,25){\line(2,3){60}}
\put(220,85){\line(2,3){60}}
\put(220,145){\line(2,3){60}}
\put(220,205){\line(2,3){28}}
\put(250,70){\line(2,3){30}}
\put(250,10){\line(2,3){30}}

\put(205,17){$x_0$}
\put(205,77){$x_2$}
\put(205,137){$x_4$}
\put(205,197){$x_6$}

\put(285,52){$x_1$}
\put(285,112){$x_3$}
\put(285,172){$x_5$}
\put(285,232){$x_7$}

\put(255,5){$b$}
\put(235,37){$y_0$}
\put(255,67){$y_1$}

\put(235,97){$y_2$}
\put(255,127){$y_3$}
\put(235,157){$y_4$}
\put(255,187){$y_5$}
\put(235,217){$y_6$}
\put(255,247){$y_7$}
\put(255,280){$a$}

\put(228,-10){{${\cal F}_3$  ($t=6$)}}



\put(320,22){\circle{7}}
\put(320,85){\circle*{6}}
\put(320,145){\circle*{6}}
\put(320,205){\circle*{6}}

\put(345,5){$\otimes$}
\put(350,40){\circle*{6}}
\put(350,70){\circle*{6}}
\put(350,100){\circle*{6}}
\put(350,130){\circle*{6}}
\put(350,160){\circle*{6}}
\put(350,190){\circle*{6}}
\put(350,220){\circle*{6}}
\put(345,250){$\otimes$}

\put(380,55){\circle*{6}}
\put(380,115){\circle*{6}}
\put(380,175){\circle*{6}}
\put(380,237){\circle{7}}

\put(320,25){\line(0,1){180}}
\put(350,10){\line(0,1){240}}
\put(380,55){\line(0,1){180}}
\put(380,55){\line(-2,3){60}}
\put(380,115){\line(-2,3){60}}
\put(380,175){\line(-2,3){30}}
\put(350,40){\line(-2,3){30}}
\put(320,25){\line(2,3){60}}
\put(320,85){\line(2,3){60}}
\put(320,145){\line(2,3){60}}
\put(320,205){\line(2,3){30}}
\put(350,70){\line(2,3){30}}
\put(350,10){\line(2,3){30}}

\put(305,17){$x_0$}
\put(305,77){$x_2$}
\put(305,137){$x_4$}
\put(305,197){$x_6$}

\put(385,52){$x_1$}
\put(385,112){$x_3$}
\put(385,172){$x_5$}
\put(385,232){$x_7$}

\put(355,5){$b$}
\put(335,37){$y_0$}
\put(355,67){$y_1$}

\put(335,97){$y_2$}
\put(355,127){$y_3$}
\put(335,157){$y_4$}
\put(355,187){$y_5$}
\put(335,217){$y_6$}
\put(355,250){$a=y_7$}

\put(328,-10){{${\cal F}_4$  ($t=6$)}}

  \end{picture}
  \end{center}

\caption{The four families  of forbidden  weighted posets shown for $t=6$. Solid circles represent points with weight 2, hollow circles represent points with weight 1, and circles with a cross represent points whose weight may be either 1 or 2.}

\label{forb-2-fig}
 \end{figure}

   We make a few observations about these forbidden families.  When $t=0$, the poset in ${\cal F}_2$ is one of the posets shown in Figure~\ref{three-one-forb}.  Other than that exception, the weighted posets in $\cal F$ are distinct.  For general $t$, the poset in ${\cal F}_2$ has $2t+4$ points,  those in ${\cal F}_1$ and ${\cal F}_4$ have $2t+5$ points and the poset in ${\cal F}_3$ has $2t+6$ points.  The poset in ${\cal F}_2$ with  parameter $t+1$ is the dual of the poset in ${\cal F}_3$ with parameter $t$.  
    It is not hard to see that if one point is removed from any poset in $\cal F$, the resulting weighted poset has an interval representation ${\cal I} = \{I_x\}$ in which $|I_x| = f(x)$ for all $x$.  Thus the posets in $\cal F$ constitute a minimally forbidden set. 
 
 Since all weights in this section are positive,  we can remove the condition that a cycle contain a $(-)$ arc  from the minimality hypothesis, and record this in the following remark.  

 \begin{remark} {\rm
 If $P$ is the poset $(X,\prec)$ and $f: X \to \{1, 2\}$ is a weight function, then any cycle $C$ in $G'(P,f)$ with weight at most 0 will contain a $(-)$ arc.    }
 \label{min-hyp-rem}
 \end{remark}

We next state the main theorem of this section and prove
    that the first two statements are equivalent and that these imply the third.  The proof of the remaining part will be presented after a series of lemmas.
    
\begin{Thm}  Let $P$ be an interval order with $P = (X,\prec)$ and let $f:X \to \{1,2\}$ be a weight function.  The following are equivalent:
\begin{enumerate}
\item  $P$ has an interval representation in which $|I_x| = f(x)$ for all $x \in X$.
\item  $G(P,f)$ has no negative weight cycles.
\item  None of the  weighted posets in the set $\cal F$ of Definition~\ref{forb-def} are induced in  $(P,f)$.  
\end{enumerate}
\label{main-thm}
\end{Thm}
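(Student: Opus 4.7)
The plan is to prove the remaining implication $(3) \Longrightarrow (2)$ by contrapositive: assuming $G(P,f)$ contains a negative weight cycle, we exhibit an induced weighted poset from $\mathcal{F}$ in $(P,f)$. By Proposition~\ref{G'-rem} and Remark~\ref{min-hyp-rem}, fix a cycle $C$ in $G'(P,f)$ of weight at most $0$ having the minimum number of arcs; this $C$ automatically contains a $(-)$ arc. Applying Lemma~\ref{wgt-C-lem} with $r = 2$ gives $wgt(C) \in \{-1, 0\}$. The goal is to read off the vertices of $C$, together with possibly an auxiliary point or two, as a labeled staircase matching one of the forbidden patterns in Definition~\ref{forb-def}.

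First I would develop structural lemmas governing the arc-type pattern of $C$, extending the reasoning of Lemma~\ref{bd-weights-lem} to the $\{1,2\}$-setting. In particular, any $(+,-,+)$ segment $a \to b \to c \to d$ of $C$ would, by interval-order ($\twotwo$-free) considerations, force $a \not\prec d$, and in each of the remaining cases $a \succ d$ or $a \parallel d$ one may replace the segment by a single arc $a \to d$ without increasing the cycle weight; provided $C$ retains a $(-)$ arc elsewhere, this contradicts minimality. A parallel analysis of $(-,-,+)$ and $(-,-)$ segments, tracking the weights $f(\cdot) \in \{1,2\}$ and accounting for the total $wgt(C)$, constrains where the lighter weight $1$ may appear. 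Together these observations force $C$ to have the alternating staircase shape of the families in $\mathcal{F}$: a descending chain of $(-)$ arcs punctuated by $(+)$ arcs that ascend onto short interleaved $x_i$-branches.

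Next I would label the vertices of $C$, in cyclic order, as $y_0, y_1, \ldots, y_{t+1}$ along the main descending chain and $x_1, \ldots, x_{t+1}$ (and optionally $x_0$) on the ascending branches, matching the notation of Definition~\ref{forb-def}. The comparabilities $y_i \prec y_{i+1}$, $x_i \prec y_{i+1}$, and $y_i \prec x_{i+2}$ come directly from the arcs of $C$, and $x_i \prec x_{i+2}$ follows by transitivity through $y_{i+1}$. The particular family $\mathcal{F}_j$ is then determined by two binary choices about the closure of $C$: whether the cycle closes at the top through a separate point $a$ lying above (families $\mathcal{F}_1, \mathcal{F}_3$) or identifies $a$ with $y_{t+1}$ (families $\mathcal{F}_2, \mathcal{F}_4$), and whether the bottom of the cycle features an extra vertex $x_0$ with $b \prec x_1$ (families $\mathcal{F}_3, \mathcal{F}_4$) or not (families $\mathcal{F}_1, \mathcal{F}_2$); the constraint $wgt(C) \in \{-1,0\}$ combined with integer arc weights then forces exactly two designated vertices to carry weight $1$, matching each family's specification, with the degenerate case of small $t$ collapsing to the weighted $\threeone$'s of Figure~\ref{three-one-forb}.

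The main obstacle is verifying that the labeled vertices induce the forbidden poset \emph{exactly}, with no spurious additional comparabilities. Any unintended relation between two $x_i$'s or between an $x_i$ and a non-adjacent $y_j$ would either violate the $\twotwo$-free property of $P$ (by producing an induced $\twotwo$ with a neighboring pair on $C$) or supply a chord in $G'(P,f)$ that creates a shorter cycle of weight at most $wgt(C)$ still containing a $(-)$ arc, contradicting the minimality of $C$. Carrying out this elimination uniformly across the four families, while also correctly handling the endpoint attachments of $a$ and $b$ and the boundary behavior at small $t$, is where the bulk of the technical effort will concentrate.
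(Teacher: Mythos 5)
Your strategy for $(3)\Rightarrow(2)$ is the same as the paper's: take a cycle $C$ of weight at most $0$ in $G'(P,f)$ with fewest arcs, analyze its structure, and read off an induced member of $\cal F$. However, there are two genuine gaps. First, you never address $(2)\Rightarrow(3)$ (equivalently $(1)\Rightarrow(3)$): the equivalence also requires verifying that every weighted poset in $\cal F$ really does force a negative cycle in $G(P,f)$ --- the paper does this by exhibiting, for the weighted $\threeone$'s and for each family ${\cal F}_i$, an explicit cycle of weight $0$ in $G'$ (hence negative in $G$, via Proposition~\ref{G'-rem}). Calling $(3)\Rightarrow(2)$ ``the remaining implication'' leaves this direction unproved; only $(1)\Leftrightarrow(2)$ comes for free from Proposition~\ref{potential-prop}.

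Second, the structural step is where your sketch would fail as written. In the $\{1,2\}$ setting you cannot ``extend Lemma~\ref{bd-weights-lem}'': contracting a $(-,-)$ segment raises the weight by $f(b)\ge 1$ and may destroy the property $wgt\le 0$, and excluding $(+,-,+)$ segments does not by itself force your claimed shape --- it still permits patterns such as $(-,-,+,+,-,-,+,+)$. The paper's key fact (Lemma~\ref{plus-minus-lem}) is that a minimal cycle is a \emph{single} run of $(+)$ arcs followed by a \emph{single} run of $(-)$ arcs, proved using $wgt(C)\ge -1$ (Lemma~\ref{wgt-C-lem} with $r=2$) together with the zero-weight-segment closure statements of Lemma~\ref{four-claims-lem}. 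Your ``alternating staircase'' picture of $C$ ---$(-)$ arcs punctuated by $(+)$ branches--- is therefore not what minimality yields, and it leads to incorrect claims downstream: the comparabilities $x_i\prec y_{i+1}$ and $y_i\prec x_{i+2}$ do \emph{not} come directly from arcs of $C$ (in the true cycle the $x_i$'s form one consecutive $(+)$ path and the $y_i$'s one consecutive $(-)$ path), and $x_i\prec x_{i+2}$ does not follow ``by transitivity through $y_{i+1}$'' (transitivity only gives $x_i\prec x_{i+3}$, since $y_{i+1}\parallel x_{i+2}$). These relations must instead be extracted from minimality, e.g.\ any segment of $C$ of weight at least $3$ forces its endpoints to satisfy $u\prec v$, as in Lemma~\ref{cycle-to-poset-lem}; likewise the exact placement of the two weight-$1$ points ($y_0$ versus $x_0$, and $y_{t+1}$ versus $x_{t+1}$), which distinguishes the four families, requires the case analysis of Lemma~\ref{families-lem}. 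Your proposal defers precisely this work to ``the bulk of the technical effort'' without indicating how it would be carried out, so the argument as outlined does not yet constitute a proof.
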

\noindent
\emph{Proof. }
The equivalence of $(1)$ and $(2)$ follows directly from Proposition~\ref{potential-prop}.



\medskip

\noindent $ (2) \Longrightarrow (3)$.    We show that for  each pair  $(Q,f)$ 
 in the forbidden set $\cal F$, the weighted digraph $G(Q,f)$ has a negative weight cycle.    For the poset $\threeone$  labeled $(a \succ y \succ b) \parallel x$ and weighted as shown in in Figure~\ref{three-one-forb}, the cycle
   $a \xrightarrow{ -f(y) - \epsilon} y \xrightarrow{-f(b) - \epsilon  }b  \xrightarrow{+f(b) } x   \xrightarrow{+ f(x) } a $  in $G(P,f)$ has weight $-f(y) -2\epsilon + f(x)  $, which is negative when either $f(y) = 2$ or $f(x) = 1$.  For the posets in families ${\cal F}_i$, for $1 \le i \le 4$, the cycles shown in Figure~\ref{cycle-family-fig}
  have weight 0 in $G'(P,f)$ and thus have negative weight in $G(P,f)$.     
 This completes the proof of $ (2) \Longrightarrow (3)$.

The next results establish properties of cycles in $G'(P,f)$ that satisfy the minimality hypothesis.  We include part (b) of Lemma~\ref{four-claims-lem} for completeness, although it is not needed in this paper.  By Remark~\ref{min-hyp-rem},  a cycle $C$ in  $G'(P,f)$ satisfies the  minimality hypothesis  for $(P,f)$ if  $wgt(C) \le 0$ and $C$   has the  minimum number of arcs among such  cycles.

\begin{lemma}
 If $C$ satisfies the minimality hypothesis for $(P,f)$ then the following hold.
 
\smallskip
\noindent 
(a) \   If  $S': a \xrightarrow{+}   {_b S_c}$  is a segment of  $C$  and $wgt(S) =0$ then $c \succ a$ in $P$ and cycle $C$ consists of $a \xrightarrow{+}{_b S_c} \xrightarrow{-}a$.

\smallskip
  \noindent
(b) \ 
  If  $S': {_a S_b} \xrightarrow{-f(c)} c$  is a segment of  $C$  and $wgt({_a S_b}) =0$ then the  cycle $C$ consists of  ${_a S_b} \xrightarrow{-} c \xrightarrow{} a.$

  \smallskip
  \noindent
  (c) \ If  $S': a \xrightarrow{-}{_b S_c} $  is a segment of  $C$  with  $wgt(S') =0$ and $S \neq \emptyset$  then $a=c$ and cycle $C$  is the segment $S'$.
  
\smallskip

\noindent
(d) \ 
  If  $S': {_a S_b}  \xrightarrow{+} c$  is a segment of  $C$   with $wgt(S') =0$  and $S \neq \emptyset$ then $a=c$ and cycle $C$  is the segment $S' $.
 
 \label{four-claims-lem}

  \end{lemma}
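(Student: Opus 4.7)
The plan is to handle all four parts by a common shortcut argument. For each, I assume the stated conclusion fails and produce a closed walk $W$ in $G'(P,f)$ with strictly fewer arcs than $C$ and total weight at most $0$. Decomposing $W$ into cycles yields at least one cycle of weight at most $0$; by Remark~\ref{min-hyp-rem} this cycle automatically contains a $(-)$ arc (since every $(+)$ arc has positive weight), contradicting the minimality of $C$. The shortcuts come from the $P$-relation between the endpoints $u,v$ of any subpath I wish to replace: if $u\succ v$, substitute the single $(-)$ arc $u\to v$ of weight $-f(v)$; if $u\parallel v$, substitute the $(+)$ arc $u\to v$ of weight $f(u)$; and any $(-,-)$ subpath $u\to w\to v$ collapses via transitivity to a single $(-)$ arc.

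For (a), let $T$ denote the portion of $C$ from $c$ back to $a$; then $wgt(T)=wgt(C)-f(a)\le -1$. Case-split on the relation between $c$ and $a$. If $c=a$, then $C=S'$ has weight $f(a)>0$, impossible. If $c\parallel a$, substituting $S'$ by the single $(+)$ arc $a\to c$ of weight $f(a)$ preserves $wgt(C)$ but strictly reduces the arc count. If $c\prec a$, substituting $S'$ by the single $(-)$ arc $a\to c$ of weight $-f(c)$ yields a strictly shorter closed walk of weight $-f(c)+wgt(T)\le -f(c)-1<0$. Both cases contradict minimality, forcing $c\succ a$; applying the same shortcut to $T$ (replace it by the single $(-)$ arc $c\to a$ of weight $-f(a)$) then forces $T$ to consist of exactly that arc, yielding the stated form of $C$.

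Parts (b), (c) and (d) follow the same template, case-splitting on the relation between the two endpoints of $S'$. In (c) and (d) the hypothesis $S\ne\emptyset$ is used precisely to guarantee that the single-arc substitution strictly reduces the arc count, forcing $a=c$ (so that no closing arc is available) and hence $C=S'$. In (c) the initial $(-)$ arc $a\to b$ combined with transitivity collapses a $(-,-)$ start of $S$ whenever $a\succ c$; (d) is the mirror statement with the terminal $(+)$ arc. Part (b) runs the same four-case analysis on the relation between $c$ and $a$, with both $(+)$ and $(-)$ closing arcs admissible in the conclusion.

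The main obstacle is the case bookkeeping, and in particular the ``incomparable'' sub-cases (such as $c\parallel a$ in (c) and in (b) when $f(a)>f(c)$) where the naive single-arc substitution produces a closed walk of strictly positive weight. For these, a single shortcut across $S'$ or $T$ is not enough; one must combine modifications of both $S'$ and $T$, and use the lower bound $wgt(C)\ge 1-r=-1$ from Lemma~\ref{wgt-C-lem} to pin $wgt(C)\in\{-1,0\}$ and control the resulting arithmetic. Once these borderline sub-cases are resolved, all remaining arguments are mechanical applications of the shortcut template.
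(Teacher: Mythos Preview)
Your treatment of (a) and (b)---casing on the $P$-relation between the endpoints $a,c$ of $S'$ and shortcutting---matches the paper's argument and is correct. The gap is in (c) and (d). There you again case on $a$ versus $c$, flag the sub-case $a\parallel c$ as an obstacle, and assert without details that ``combining modifications of both $S'$ and $T$'' together with the bound $wgt(C)\ge -1$ will close it. It does not. In (c) with $a\parallel c$, replacing $S'$ by the $(+)$ arc $a\to c$ yields a closed walk of weight $f(a)+wgt(T)=f(a)+wgt(C)$, which is positive unless $f(a)=1$ and $wgt(C)=-1$; replacing $T$ by the $(+)$ arc $c\to a$ yields weight $wgt(S')+f(c)=f(c)>0$; and replacing both gives a $2$-cycle of positive weight. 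No combination of these outer-endpoint shortcuts handles, for instance, $f(a)=2$ with $wgt(C)=0$. Your plan as written simply does not finish this case, and the appeal to Lemma~\ref{wgt-C-lem} only narrows the arithmetic without eliminating it.

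The paper sidesteps the difficulty by casing on the \emph{inner} endpoints---those of $S$, not $S'$. In (c) one compares $b$ and $c$: since $wgt({}_bS_c)=f(b)$, if $b\succ c$ or $b\parallel c$ one replaces only ${}_bS_c$ (retaining the arc $a\to b$) by the single arc $b\to c$, producing a shorter cycle of weight at most $0$. This forces $c\succ b$, and then ${}_bS_c\xrightarrow{-f(b)}b$ is itself a cycle of weight $0$ with exactly $|S'|$ arcs; minimality makes this cycle equal to $C$, hence $a=c$. Part (d) is handled symmetrically by comparing $a$ and $b$. This switch from outer to inner endpoints is the idea your plan for (c) and (d) is missing.
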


  \begin{proof}

To prove (a), suppose 
  ${_b S_c}$ is a segment of $C$ with $wgt({_b S_c} )= 0$.  If $a \parallel c$ or $a \succ c$ we can replace segment $S'$ on $C$ by the arc $a \xrightarrow{ }{ c} $ to get a shorter cycle whose weight is at most 0, a contradiction.    Hence $c \succ a$.  Then the cycle $a \xrightarrow{+f(a)}{_b S_c} \xrightarrow{-f(a)}a$ has weight at most 0 and thus is the cycle $C$ by minimality.   
  \smallskip
  
  To prove (b), 
  suppose  $S'$ is a segment of $C$ and $wgt({_a S_b}) =0$.  If $a \succ c$ we can replace $S'$ by $a \xrightarrow{-f(c)}{ c} $ on $C$  to obtain a shorter cycle  with the same weight in $G'(P,f)$ as $C$,   a contradiction.    Thus $c \succ a$ or $c \parallel a$ in $P$.  Now the cycle $  {_a S_b} \xrightarrow{-f(c)} c \xrightarrow{ } a$ in $G'(P,f)$ has the following weight:  $wgt({_a S_b}) - f(c) + w'_{ca} \le 0$.  By the minimality of $C$, this cycle is $C$, proving (b).

 \smallskip
 To prove (c), suppose $S'$ is the segment  $a \xrightarrow{-}{_b S_c} $ of $C $ with $wgt(S') = 0$ and $S \neq \emptyset$.   Since $wgt(S') =0$, we know $wgt({_b S_c}) = f(b)$.   If $b \succ c$ or $b \parallel c$ we can replace segment $S'$ by $a \xrightarrow{-f(b) }{ b}  \xrightarrow{ }{ c} $ to obtain a shorter cycle whose weight is negative (if $b \succ c$) or zero (if $b \parallel c$), contradicting the minimality of $C$.  Thus $c \succ b$.  Now  the cycle ${_b S_c} \xrightarrow{-f(b)}b $ has weight 0, and by the minimality of $C$ this is cycle $C$ and $a=c$.  This proves (c).

\smallskip

Finally, to prove (d), 
 suppose $S'$ is the  segment  ${_a S_b}  \xrightarrow{+} c$ of $C$ with $wgt(S') =0$ and $S \neq \emptyset$.  Since $wgt(S') =0$ and $w'_{bc} = f(b)$, we know $wgt(S) = -f(b)$.  If $a \succ b$ we can replace $S'$ by $a \xrightarrow{ -f(b)}{ b}  \xrightarrow{ +f(b)}{ c} $ to get a shorter  weight 0 cycle, a contradiction.   If $b \parallel a$ or $b \succ a$, the cycle $C':   {_a S_b}  \xrightarrow{ } a$ has weight zero  (if $b \parallel a$) or negative weight (if $b \succ a$).  By the minimality of $C$, we have $a=c$ and $C=C'$, proving (d).
\end{proof}

    \begin{lemma}
If $C$ satisfies the minimality hypothesis for $(P,f)$ then 
  $C$ consists of a path of $(+)$ arcs followed by a path of $(-)$ arcs.
  \label{plus-minus-lem}
\end{lemma}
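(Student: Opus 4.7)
The plan is to show that the arcs of $C$, read cyclically, admit at most one $(-, +)$-transition. Since $C$ has both arc types---at least one $(-)$-arc by Remark~\ref{min-hyp-rem}, and at least one $(+)$-arc because an all-$(-)$ cycle would give $x \succ x$ by transitivity---the cyclic arc sequence has at least one $(-, +)$-transition; combined with the upper bound of one, $C$ has exactly one cyclic $(-, +)$-transition. Rotating $C$ so that this transition lies at the wrap-around then yields the claimed decomposition of $C$ into a $(+)$-path followed by a $(-)$-path.

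I would proceed by contradiction: suppose $C$ has two or more cyclic $(-, +)$-transitions. Then in any linearization of $C$ at least one such transition is internal (not at the wrap-around), so I can pick consecutive arcs $a \xrightarrow{-f(b)} b \xrightarrow{+f(b)} c$ in $C$ with $a \succ b$ and $b \parallel c$; since cycles have distinct vertices, $a, b, c$ are pairwise distinct, and the 2-arc sub-segment has weight $0$ in $G'(P,f)$.

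The key step is a case analysis on the $P$-relation between $a$ and $c$. The case $c \succ a$ forces $c \succ a \succ b$, giving $c \succ b$ by transitivity, contradicting $b \parallel c$. If $a \succ c$, the $(-)$-arc $a \to c$ in $G'(P,f)$ of weight $-f(c)$ shortcuts the 2-arc sub-segment and yields a strictly shorter cycle of weight at most $wgt(C) - f(c) < 0$, contradicting the minimality of $C$. In the remaining case $a \parallel c$, I would apply Lemma~\ref{four-claims-lem}(c) to the sub-segment $S' = a \xrightarrow{-} b \xrightarrow{+} c$ with $S$ the single $(+)$-arc $b \to c$ (so $S \neq \emptyset$ and $wgt(S') = 0$): the lemma's conclusion $a = c$ directly contradicts $a \parallel c$, closing the case.

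The main obstacle is verifying that Lemma~\ref{four-claims-lem}(c) applies cleanly to the 2-arc sub-segment with $|S| = 1$, since the shortcut arguments in that lemma's proof are most naturally phrased for segments with at least $3$ arcs. If needed, a direct supplementary argument paralleling the lemma's proof---using the minimality of $C$, the distinctness of cycle vertices, and the bound $wgt(C) \ge -1$ from Lemma~\ref{wgt-C-lem}---closes the $a \parallel c$ case independently and completes the contradiction.
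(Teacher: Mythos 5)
Your overall frame (exactly one cyclic $(-,+)$-transition, plus the easy facts that $C$ has both arc types) is fine, and your cases $c \succ a$ and $a \succ c$ are handled correctly. The gap is the case $a \parallel c$, and it is not a technicality that a ``supplementary argument'' can patch: Lemma~\ref{four-claims-lem}(c) is simply not available (and is in fact false) when the segment $S$ is a single $(+)$ arc. Its proof shortens $C$ by replacing the segment from $b$ to $c$ with the direct arc $b\to c$, which shortens nothing when $S$ already is that arc; and the configuration you would need to exclude --- a weight-$0$ segment $a \xrightarrow{-f(b)} b \xrightarrow{+f(b)} c$ with $a \succ b$, $b \parallel c$, $a \parallel c$, $a \neq c$ --- actually occurs in cycles satisfying the minimality hypothesis. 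For instance, for the weighted poset ${\cal F}_1$ (Definition~\ref{forb-def}), the weight-$0$ cycle of family $\mathcal{C}_1$ in Figure~\ref{cycle-family-fig} is a minimum-length cycle of weight at most $0$ (minimality of the forbidden poset forces any such cycle to use all the points), and at its junction it contains $y_0 \xrightarrow{-f(b)} b \xrightarrow{+f(b)} x_1$ with $y_0 \succ b$, $b \parallel x_1$, $y_0 \parallel x_1$, $y_0 \neq x_1$. So no argument based only on this local two-arc configuration, minimality, vertex-distinctness, and $wgt(C) \ge -1$ can produce a contradiction.

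The structural reason your plan cannot be repaired locally is that, after the easy cases, your argument never uses the assumed existence of a \emph{second} $(-,+)$-transition (the ``internal transition'' device only guarantees distinctness of $a,b,c$, which already holds for any cycle with at least three vertices). If the $a \parallel c$ case could be closed as you propose, you would have shown that a minimal cycle has \emph{no} $(-,+)$-junction at all, hence that no cycle satisfies the minimality hypothesis --- contradicted by the cycles above. The genuine content of Lemma~\ref{plus-minus-lem} is global: assuming $t \ge 2$ maximal $(+)$-blocks, the paper compares weights of whole blocks (re-indexing so that $|wgt(S_2)| \le |wgt(S_3)|$, using Lemma~\ref{wgt-C-lem} to bound $wgt(C) \ge -1$), locates an intermediate vertex $x$ in the next $(+)$-block where a \emph{longer} segment has weight $0$ or $-1$, and only then applies Lemma~\ref{four-claims-lem}(a),(c),(d) to those longer segments to collapse $C$ and contradict $t \ge 2$. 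Some argument of that kind, interacting with more than one block, is unavoidable; your proposal as written does not contain it.
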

 
 \begin{proof}
 Choose a starting point for $C$ so that the arcs of $C$ can be partitioned into segments $S_1, S_2, S_3, \ldots S_{2t}$ where the arcs in $S_i$ are $(+)$ for $i$ odd and the arcs in $S_i$ are $(-)$ for $i$ even.  We wish to show $t=1$.  For a contradiction, assume $t \ge 2$.  
 
 If $|wgt(S_{2j})| > |wgt(S_{2j-1})|$ for each $j$, then $wgt(C) \le -t \le -2$.  This contradicts Lemma~\ref{wgt-C-lem} when $r=2$.  Thus we can re-index the segments if necessary so that $|wgt(S_{2})| \le |wgt(S_3)|$.  Let $(a,b)$,  $(b,c)$ be the last two arcs in $S_1$, $(c,d)$ be the first arc of $S_2$, and $y$ the vertex at the end of $S_3$.  Thus cycle $C$ contains the  segment   $S$:   $a \xrightarrow{ +f(a)}b\xrightarrow{+f(b) }c  \xrightarrow{-f(d)}d$.
 
 First we show $f(b) \neq f(d)$.  For a contradiction, assume $f(b) = f(d)$.  Apply Lemma~\ref{four-claims-lem}(a)    to $S$  to conclude that $C$ is the cycle $a \xrightarrow{ +f(a)}b\xrightarrow{+f(b) }c  \xrightarrow{-f(d)}d \xrightarrow{-f(a)}a$, contradicting our assumption that $t \ge 2$.  
    Thus $f(b) \neq f(d)$. 
 
Consider the case in which $f(d) = 2$, and thus $f(b) = 1$.  Since $|wgt(S_{2})| \le |wgt(S_3)|$, we have $wgt({_c S_y}) \ge 0$.  Also, $w_{cd} = -f(d) = -2$, and arc weights belong to the set $\{-2, -1, 1, 2\}$ so there exists a vertex $x$ in segment $S_3$ for which $wgt({_c S_x}) \in \{0, -1\}$.    If $wgt({_c S_x}) = 0$, apply Lemma~\ref{four-claims-lem}(c) to  conclude that $C$ consists of $ {_c S_x} \to c$, contradicting $t \ge 2$.  If $wgt({_c S_x}) = -1$ then $wgt({_b S_x}) = 0$.  In this case, apply Lemma~\ref{four-claims-lem}(d) to the segment $ {_b S_x}$ to conclude that $C$ consists of $ {_c S_x} \to b$, contradicting $t \ge 2$.

  Otherwise, $f(d) = 1$, and thus $f(b) = 2$.  Again, $wgt({_c S_y}) \ge 0$  
  and arc weights belong to the set $\{-2, -1, 1, 2\}$, so there exists a vertex $x$ in segment $S_3$ for which $wgt({_c S_x}) \in \{0,  -1\}$.    If $wgt({_c S_x}) = 0$, apply Lemma~\ref{four-claims-lem}(c) to segment $ c\xrightarrow{-}{_d S_x}$ to conclude that $C$ consists of $ {_c S_x} \to c$, contradicting $t \ge 2$.  If $wgt({_c S_x}) = -1$ then $wgt({_d S_x}) = 0$.  In this case, apply Lemma~\ref{four-claims-lem}(d)  to the segment $ {_d S_x}$ to  contradict  $t \ge 2$.   
  \end{proof}

\begin{lemma}
Let $(P,f)$ be a weighted poset that does not contain  a weighted poset from Figure~\ref{three-one-forb}.
  If  cycle $C$ satisfies the minimality hypothesis for $(P,f)$ then  $C$ belongs to one of the four cycle   families $\mathcal{C}_i$ shown in Figure~\ref{cycle-family-fig}.
\label{families-lem}
\end{lemma}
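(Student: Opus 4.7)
\emph{Proof plan.}  By Lemma~\ref{plus-minus-lem}, $C$ decomposes as a $(+)$-path $u_0 \xrightarrow{+} u_1 \xrightarrow{+} \cdots \xrightarrow{+} u_p$ followed by a $(-)$-path $u_p \xrightarrow{-} v_1 \xrightarrow{-} \cdots \xrightarrow{-} v_q = u_0$, giving the chain $u_p \succ v_1 \succ \cdots \succ v_q = u_0$ in $P$ and adjacent incomparabilities $u_{i-1} \parallel u_i$ on the $(+)$-path.  Lemma~\ref{wgt-C-lem} with $r = 2$ forces $wgt(C) \in \{-1, 0\}$.  Relabelling $b := u_0$ and $a := u_p$, the plan is to identify the $v_j$'s with the $y_j$'s and the intermediate $u_i$'s (for $1 \le i \le p-1$) with the $x_i$'s of Definition~\ref{forb-def}, then recover the four families by a boundary case analysis.

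I first show that $u_{i-1} \prec u_{i+1}$ for each $1 \le i \le p-1$: otherwise, replacing the sub-path $u_{i-1} \to u_i \to u_{i+1}$ by the single arc $u_{i-1} \to u_{i+1}$ (of type $(+)$ if $u_{i-1} \parallel u_{i+1}$ and of type $(-)$ if $u_{i-1} \succ u_{i+1}$) produces a strictly shorter cycle of weight at most $wgt(C) \le 0$, and Remark~\ref{min-hyp-rem} guarantees that this shorter cycle still contains a $(-)$-arc, contradicting the minimality of $C$.  Consequently the sequences $u_0 \prec u_2 \prec u_4 \prec \cdots$ and $u_1 \prec u_3 \prec u_5 \prec \cdots$ are chains in $P$, precisely mirroring the relations $x_i \prec x_{i+2}$ of Definition~\ref{forb-def}.

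Next, because the $v_j$'s form a chain in $P$, for each $u_i$ the set $\{j : u_i \parallel v_j\}$ is a consecutive interval of indices: if $u_i$ were comparable to some $v_j$ strictly between two incomparable $v_{j_1}, v_{j_2}$, transitivity along the chain would contradict one of the incomparabilities.  Whenever this interval contains three indices $j, j+1, j+2$, the induced poset $(v_j \succ v_{j+1} \succ v_{j+2}) \parallel u_i$ is a $\threeone$, and the hypothesis that none of the weighted posets of Figure~\ref{three-one-forb} is induced in $(P,f)$ forces $f(v_{j+1}) = 1$ and $f(u_i) = 2$.  Combined with repeated applications of parts (a), (c), and (d) of Lemma~\ref{four-claims-lem}, which forbid weight-zero sub-paths that would shortcut $C$, these constraints force each interior $v_j$ to have weight 2, each interior $u_i$ to have weight 2, and each $u_i$ to be incomparable to exactly the $v_j$'s at positions matching the relations $x_i \parallel y_{i-1}, y_i$ of Definition~\ref{forb-def}.

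Finally, a case analysis at the endpoints $a = u_p$ and $b = u_0$ determines which of the four families contains $C$.  Depending on whether $a$ coincides with the chain top $y_{t+1}$ (so $a = y_{t+1}$, giving $\mathcal{C}_2$ or $\mathcal{C}_4$) or strictly exceeds a weight-$1$ top $y_{t+1}$ (giving $\mathcal{C}_1$ or $\mathcal{C}_3$), and depending on whether the first $(+)$-arc from $b$ enters the main $x$-chain directly at $x_1$ (families $\mathcal{C}_1, \mathcal{C}_2$) or first traverses an additional weight-$1$ vertex $x_0$ with $b \prec x_1$ (families $\mathcal{C}_3, \mathcal{C}_4$), the cycle $C$ matches exactly one of $\mathcal{C}_1, \mathcal{C}_2, \mathcal{C}_3, \mathcal{C}_4$.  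The main obstacle is this boundary analysis: the forbidden-$\threeone$ hypothesis constrains only interior weights, so Lemma~\ref{four-claims-lem} must be applied carefully near $a$ and $b$ to rule out short weight-zero subcycles and to verify that the extremal weights and comparabilities align with those prescribed in Definition~\ref{forb-def}.
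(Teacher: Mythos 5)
Your setup matches the paper's: decompose $C$ by Lemma~\ref{plus-minus-lem} into a $(+)$-path followed by a $(-)$-path and note $wgt(C)\in\{-1,0\}$ by Lemma~\ref{wgt-C-lem}. But the heart of this lemma --- pinning down the exact arc-weight pattern of $C$ (which vertices get weight $1$, where the runs of weight-$2$ arcs are, whether $a=y_{t+1}$, whether an extra vertex $x_0$ occurs) --- is asserted rather than proved. The engine you propose, namely that whenever some $u_i$ is incomparable to three consecutive $v_j$'s the forbidden weighted $\threeone$'s of Figure~\ref{three-one-forb} force $f(v_{j+1})=1$ and $f(u_i)=2$, cannot carry this weight: incomparabilities between the $u$'s and $v$'s are not carried by arcs of $C$ and are themselves only available after minimality/shortcut arguments (that is the content of Lemma~\ref{cycle-to-poset-lem}, proved afterwards), and in the configurations that actually arise each $x_i$ is incomparable to only \emph{two} consecutive $y$'s, so your three-in-a-row hypothesis never fires and in particular cannot be what forces the interior weights to equal $2$. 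The sentence ``combined with repeated applications of parts (a), (c), and (d) of Lemma~\ref{four-claims-lem}\dots'' is precisely where the proof should be, and nothing is supplied there; note also that Lemma~\ref{four-claims-lem} does not ``forbid weight-zero sub-paths'' --- it says that if such a segment occurs then $C$ \emph{is} essentially that segment, and it is this positive use that tells you where $C$ closes up.

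The argument you would need runs differently. Split into cases according to the weight of the second $(+)$ arc, i.e.\ $f(x_1)=2$ or $f(x_1)=1$. In the first case one first shows $f(y_0)=1$: if the arc $y_1\to y_0$ had weight $-2$, the segment from $y_1$ to $x_2$ has weight $0$, so Lemma~\ref{four-claims-lem}(c) forces $C$ to be the $4$-cycle $y_1\to y_0\to b\to x_1\to x_2=y_1$; then $y_1\succ y_0\succ b$ with $x_1$ incomparable to all three (the incomparability $x_1\parallel y_0$ follows by transitivity) is an induced $\threeone$ whose middle element has weight $2$, contradicting the hypothesis --- this is the one place Figure~\ref{three-one-forb} is actually used. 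Next take the smallest $j\ge 2$ at which the pattern deviates ($f(y_{j-1})=1$ or $f(x_j)=1$); such a $j$ exists, since otherwise $wgt(C)=-1$ forces the cycle to close with the segment from $x_1$ to $y_1$ having weight $0$, and Lemma~\ref{four-claims-lem}(a) then yields a contradiction. Applying Lemma~\ref{four-claims-lem}(c) to the zero-weight segment created at this smallest $j$ shows that $C$ closes exactly there, landing it in $\mathcal{C}_1$ or $\mathcal{C}_2$; the case $f(x_1)=1$ gives $\mathcal{C}_3$ or $\mathcal{C}_4$ analogously. Your closing ``boundary case analysis'' paragraph describes the desired conclusion rather than deriving it, so as it stands the proposal has a genuine gap at the central step.
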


\begin{proof}

As a result of Lemma~\ref{plus-minus-lem} we have shown that $C$ can be partitioned into two segments,  $ {_a S_b} $ and $  {_b T_a} $ where the arcs of $S$ are all $(-)$ and the arcs of $T$ are all $(+)$.    The weight on the arc entering $b$ is $-f(b)$ and the weight on the arc leaving $b$ is $+f(b)$.  This is indicated by  $\xrightarrow{ -}b \xrightarrow{ +}$ in Figure~\ref{cycle-family-fig}.  We consider cases based on the weight of the second arc of $T$.  We first consider the case in which the second arc of $T$ has weight $+2$, and show this results in $C $ belonging to $\mathcal{C}_1$ or $\mathcal{C}_2$ of Figure~\ref{cycle-family-fig}.

 Label segment $T$ as $b \xrightarrow{ +f(b)}x_1\xrightarrow{+2  }x_2  \xrightarrow{+ }x_3  \cdots$ and the segment $S$ as $\cdots \xrightarrow{ -}y_2\xrightarrow{-  }y_1  \xrightarrow{- }y_0 \xrightarrow{- }b$.    
We begin by showing that $wgt(y_1,y_0)= -1$, which will imply that $f(y_0) = 1$.  If $wgt(y_1,y_0) = -2$ then segment $ {_{y_1} C_{x_2}}$ has weight 0 and by Lemma~\ref{four-claims-lem}(c) we have $y_1=x_2$ and $C$ consists of  $y_1 \xrightarrow{ -2}y_0\xrightarrow{-  }b  \xrightarrow{+ }x_1 \xrightarrow{+2 }x_2 = y_1$.  Then  the points $y_1,y_0, b, x_1$ induce in $(P,f)$ a $\threeone$ in which $y_0$,  the middle element of the chain, has weight 2,  a contradiction.    Thus $wgt(y_1,y_0)= -1$.

Next suppose $wgt(x_k,x_{k+1}) = 2$ and $wgt(y_{k},y_{k-1}) = -2$ for all $k \ge 2$.  By Lemma~\ref{wgt-C-lem} with $r=2$, we know that  $C$ has weight $0$ or $-1$.  Since $wgt(y_1,y_0)= -1$, we know $wgt(C) = -1$ and $y_{s+1} = a = x_{s+1}$ for some $s \ge 2$.  Now segment ${_{x_1} C_{y_1}}$  has weight 0 and $wgt(b,x_1) > 0$, so by Lemma~\ref{four-claims-lem}(a), $C$  is the cycle $b \xrightarrow{ +}{_{x_1} C_{y_1}}\xrightarrow{-  }b   $, a contradiction since $(y_1,y_0)$ is an arc of $C$.  

Thus there exists some smallest $j \ge 2$ for which $wgt(x_j,x_{j+1}) = 1$  or $wgt(y_{j},y_{j-1}) = -1$.  If  $wgt(y_{j},y_{j-1}) = -1$ then segment ${_{y_{j}} C_{x_j}}$ has weight 0, and by Lemma~\ref{four-claims-lem}(c) this is all of $C$ and $C$ is in  $\mathcal{C}_1 $with $t = j-1$.    Otherwise,  $wgt(y_{j},y_{j-1}) = -2$ and 
$wgt(x_j,x_{j+1}) = 1$.  Now the segment ${_{y_{j}} C_{x_{j+1}}}$ has weight 0, so by Lemma~\ref{four-claims-lem}(c), $y_j = x_{j+1}$ and this is all of $C$.  Hence $C$ is in $\mathcal{C}_2$ with $t = j-1$.   

 We omit the details of the case in which the second arc of $T$ has weight 1, which  is similar.  In this case we conclude that  $C$ belongs to $\mathcal{C}_3$ or $\mathcal{C}_4$.     
 \end{proof}

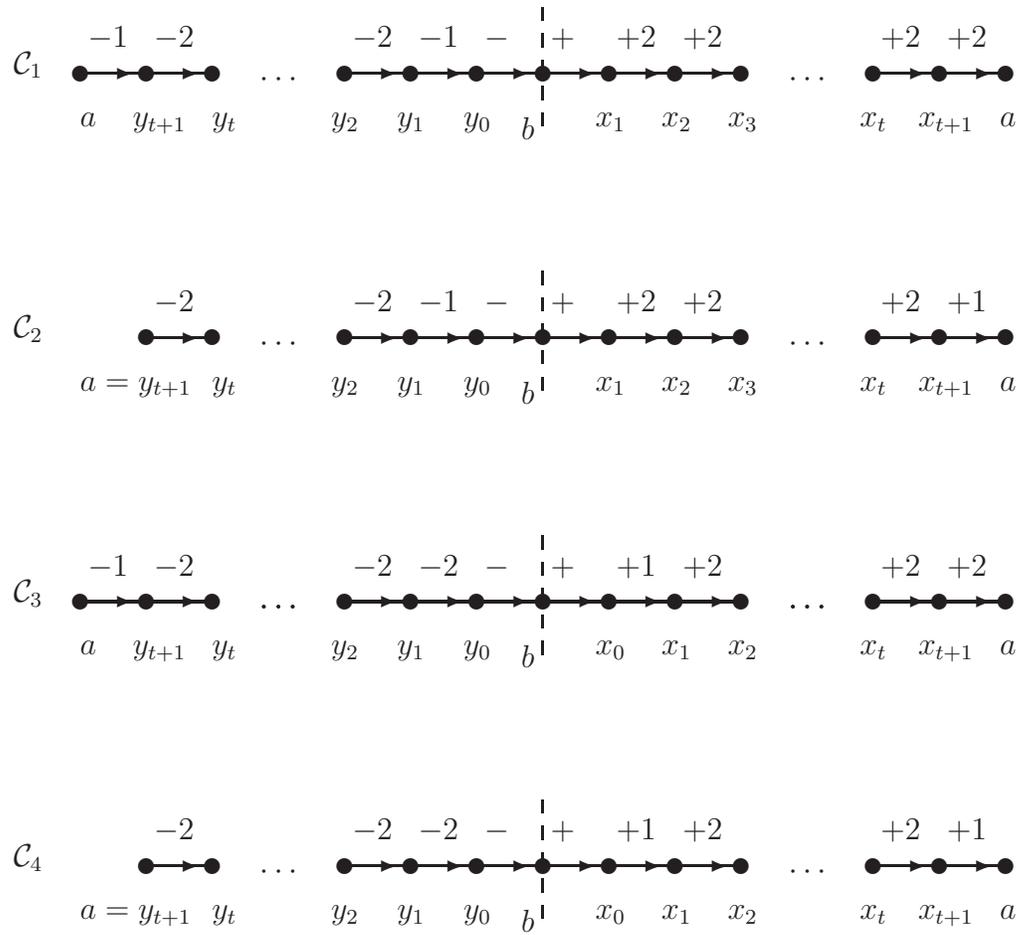
\begin{figure}
\begin{center}
 \begin{picture}(350,335)(0,0)
\thicklines

 
  \put(-25,325){$\mathcal{C}_1$}
 
 \put(0,325){\circle*{6}}
\put(25,325){\circle*{6}}
\put(50,325){\circle*{6}}
\put(100,325){\circle*{6}}
\put(125,325){\circle*{6}}
\put(150,325){\circle*{6}}
\put(175,325){\circle*{6}}
\put(200,325){\circle*{6}}
\put(225,325){\circle*{6}}
\put(250,325){\circle*{6}}
 \put(300,325){\circle*{6}}
\put(325,325){\circle*{6}}
\put(350,325){\circle*{6}}

\put(0,325){\vector(1,0){20}}
\put(20,325){\line(1,0){5}}

\put(25,325){\vector(1,0){20}}
\put(45,325){\line(1,0){5}}

\put(100,325){\vector(1,0){20}}
\put(120,325){\line(1,0){5}}

\put(125,325){\vector(1,0){20}}
\put(145, 325){\line(1,0){5}}

\put(150, 325){\vector(1,0){20}}
\put(170, 325){\line(1,0){5}}

\put(175, 325){\vector(1,0){20}}
\put(195, 325){\line(1,0){5}}

\put(200, 325){\vector(1,0){20}}
\put(220, 325){\line(1,0){5}}

\put(225, 325){\vector(1,0){20}}
\put(245, 325){\line(1,0){5}}
  
  \put(300, 325){\vector(1,0){20}}
\put(320, 325){\line(1,0){5}}

\put(325, 325){\vector(1,0){20}}
\put(345, 325){\line(1,0){5}}

  \put(3, 335){$-1$}
 \put(28, 335){$-2$}
\put(103, 335){$-2$}
\put(128, 335){$-1$}
\put(153, 335){$-$}
\put(178, 335){$+$}
\put(203, 335){$+2$}
\put(228, 335){$+2$}

\put(303, 335){$+2$}
\put(328, 335){$+2$}

  \put(68,320){$\cdots$}
  \put(268,320){$\cdots$}

 \put(68,120){$\cdots$}
  \put(268,120){$\cdots$}
 
\put(0,305){$a$}
\put(20,305){$y_{t+1}$}
\put(50,  305){$y_{t}$}
\put(95,  305){$y_{2}$}
\put(120,  305){$y_{1}$}
\put(145,  305){$y_{0}$}
\put(167, 300){$b$}
\put(195,  305){$x_1$}
\put(220,  305){$x_2$}
\put(245,  305){$x_3$}
\put(295,  305){$x_t$}
\put(317,  305){$x_{t+1}$}
\put(348,  305){$a$}

\multiput(175,305)(0,10){5}{\line(0,1){5}}

 
 
   \put(-25,225){$\mathcal{C}_2$}
\put(25,225){\circle*{6}}
\put(50,225){\circle*{6}}
\put(100,225){\circle*{6}}
\put(125,225){\circle*{6}}
\put(150,225){\circle*{6}}
\put(175,225){\circle*{6}}
\put(200,225){\circle*{6}}
\put(225,225){\circle*{6}}
\put(250,225){\circle*{6}}
 \put(300,225){\circle*{6}}
\put(325,225){\circle*{6}}
\put(350,225){\circle*{6}}


\put(25,225){\vector(1,0){20}}
\put(45,225){\line(1,0){5}}

\put(100,225){\vector(1,0){20}}
\put(120,225){\line(1,0){5}}

\put(125,225){\vector(1,0){20}}
\put(145, 225){\line(1,0){5}}

\put(150, 225){\vector(1,0){20}}
\put(170, 225){\line(1,0){5}}

\put(175, 225){\vector(1,0){20}}
\put(195, 225){\line(1,0){5}}

\put(200, 225){\vector(1,0){20}}
\put(220, 225){\line(1,0){5}}

\put(225, 225){\vector(1,0){20}}
\put(245, 225){\line(1,0){5}}
  
  \put(300, 225){\vector(1,0){20}}
\put(320, 225){\line(1,0){5}}

\put(325, 225){\vector(1,0){20}}
\put(345, 225){\line(1,0){5}}

 \put(28, 235){$-2$}
\put(103, 235){$-2$}
\put(128, 235){$-1$}
\put(153, 235){$-$}
\put(178, 235){$+$}
\put(203, 235){$+2$}
\put(228, 235){$+2$}

\put(303, 235){$+2$}
\put(328, 235){$+1$}

  \put(68,220){$\cdots$}
  \put(268,220){$\cdots$}

 \put(68,120){$\cdots$}
  \put(268,120){$\cdots$}
 
\put(0,205){$a=y_{t+1}$}
\put(50,  205){$y_{t}$}
\put(95,  205){$y_{2}$}
\put(120,  205){$y_{1}$}
\put(145,  205){$y_{0}$}
\put(167,200){$b$}
\put(195,  205){$x_1$}
\put(220,  205){$x_2$}
\put(245,  205){$x_3$}
\put(295,  205){$x_t$}
\put(317,  205){$x_{t+1}$}
\put(348,  205){$a$}

\multiput(175,205)(0,10){5}{\line(0,1){5}}

 
    \put(-25,125){$\mathcal{C}_3$}
 \put(0,125){\circle*{6}}
\put(25,125){\circle*{6}}
\put(50,125){\circle*{6}}
\put(100,125){\circle*{6}}
\put(125,125){\circle*{6}}
\put(150,125){\circle*{6}}
\put(175,125){\circle*{6}}
\put(200,125){\circle*{6}}
\put(225,125){\circle*{6}}
\put(250,125){\circle*{6}}
 \put(300,125){\circle*{6}}
\put(325,125){\circle*{6}}
\put(350,125){\circle*{6}}

\put(0,125){\vector(1,0){20}}
\put(20,125){\line(1,0){5}}

\put(25,125){\vector(1,0){20}}
\put(45,125){\line(1,0){5}}

\put(100,125){\vector(1,0){20}}
\put(120,125){\line(1,0){5}}

\put(125,125){\vector(1,0){20}}
\put(145,125){\line(1,0){5}}

\put(150,125){\vector(1,0){20}}
\put(170,125){\line(1,0){5}}

\put(175,125){\vector(1,0){20}}
\put(195,125){\line(1,0){5}}

\put(200,125){\vector(1,0){20}}
\put(220,125){\line(1,0){5}}

\put(225,125){\vector(1,0){20}}
\put(245,125){\line(1,0){5}}
  
  \put(300,125){\vector(1,0){20}}
\put(320,125){\line(1,0){5}}

\put(325,125){\vector(1,0){20}}
\put(345,125){\line(1,0){5}}

  \put(3,135){$-1$}
 \put(28,135){$-2$}
\put(103,135){$-2$}
\put(128,135){$-2$}
\put(153,135){$-$}
\put(178,135){$+$}
\put(203,135){$+1$}
\put(228,135){$+2$}

\put(303,135){$+2$}
\put(328,135){$+2$}

 \put(68,120){$\cdots$}
  \put(268,120){$\cdots$}
  \put(0, 105){$a $}
\put(20, 105){$y_{t+1}$}
\put(50, 105){$y_{t}$}
\put(95, 105){$y_{2}$}
\put(120, 105){$y_{1}$}
\put(145, 105){$y_{0}$}
\put(167,100){$b$}
\put(195, 105){$x_0$}
\put(220, 105){$x_1$}
\put(245, 105){$x_2$}
\put(295, 105){$x_t$}
\put(317, 105){$x_{t+1}$}
\put(348, 105){$a$}

\multiput(175,105)(0,10){5}{\line(0,1){5}}

 
 
    \put(-25,25){$\mathcal{C}_4$}
    
\put(25,25){\circle*{6}}
\put(50,25){\circle*{6}}
\put(100,25){\circle*{6}}
\put(125,25){\circle*{6}}
\put(150,25){\circle*{6}}
\put(175,25){\circle*{6}}
\put(200,25){\circle*{6}}
\put(225,25){\circle*{6}}
\put(250,25){\circle*{6}}
 \put(300,25){\circle*{6}}
\put(325,25){\circle*{6}}
\put(350,25){\circle*{6}}

\put(25,25){\vector(1,0){20}}
\put(45,25){\line(1,0){5}}

\put(100,25){\vector(1,0){20}}
\put(120,25){\line(1,0){5}}

\put(125,25){\vector(1,0){20}}
\put(145,25){\line(1,0){5}}

\put(150,25){\vector(1,0){20}}
\put(170,25){\line(1,0){5}}

\put(175,25){\vector(1,0){20}}
\put(195,25){\line(1,0){5}}

\put(200,25){\vector(1,0){20}}
\put(220,25){\line(1,0){5}}

\put(225,25){\vector(1,0){20}}
\put(245,25){\line(1,0){5}}
  
  \put(300,25){\vector(1,0){20}}
\put(320,25){\line(1,0){5}}

\put(325,25){\vector(1,0){20}}
\put(345,25){\line(1,0){5}}
 
 \put(28,35){$-2$}
\put(103,35){$-2$}
\put(128,35){$-2$}
\put(153,35){$-$}
\put(178,35){$+$}
\put(203,35){$+1$}
\put(228,35){$+2$}

\put(303,35){$+2$}
\put(328,35){$+1$}
 
 \put(68,20){$\cdots$}
  \put(268,20){$\cdots$}
\put(0,5){$a= y_{t+1}$}
\put(50,5){$y_{t}$}
\put(95, 5){$y_{2}$}
\put(120, 5){$y_{1}$}
\put(145, 5){$y_{0}$}
\put(167,0){$b$}
\put(195, 5){$x_0$}
\put(220, 5){$x_1$}
\put(245, 5){$x_2$}
\put(295, 5){$x_t$}
\put(317, 5){$x_{t+1}$}
\put(348, 5){$a$}

\multiput(175,5)(0,10){5}{\line(0,1){5}}

  \end{picture}
  \end{center}

\caption{The four families  $\mathcal{C}_i$ of cycles.}

\label{cycle-family-fig}
 \end{figure}

 \begin{lemma}
Suppose $C$ satisfies the minimality hypothesis for $(P,f)$.
  If $C$ is a member of  $\mathcal{C}_i$ of Figure~\ref{cycle-family-fig} for some $i: 1 \le i \le 4$ and some $t \ge 0$ then a weighted poset  from ${\cal F}_i$  is induced in $(P,f)$. 
 \label{cycle-to-poset-lem}
 \end{lemma}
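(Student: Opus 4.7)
The plan is to show that the vertex set $V(C)$ of the cycle $C$, together with the restriction of $f$, realizes a weighted poset from ${\cal F}_i$. First I would read off from the arcs of $C$ the information they directly give about $(P,f)$: each $(-)$ arc $(u,v)$ in $C$ with weight $-f(v)$ forces $u \succ v$ in $P$ and pins down $f(v)$, while each $(+)$ arc $(u,v)$ with weight $+f(u)$ forces $u \parallel v$ in $P$ and pins down $f(u)$. For $C \in \mathcal{C}_i$, this immediately yields the chain $a \succ y_{t+1} \succ \cdots \succ y_0 \succ b$, the incomparabilities along the $x$-side of $C$ (including $b \parallel x_0 \parallel x_1$ in families $3$ and $4$), and the exact vertex weights specified in ${\cal F}_i$. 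The identifications $a = y_{t+1}$ in $\mathcal{C}_2, \mathcal{C}_4$ translate directly to the identifications prescribed in ${\cal F}_2, {\cal F}_4$.

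Next I would establish the remaining relations demanded by ${\cal F}_i$ --- namely $x_i \prec x_{i+2}$, $x_i \prec y_{i+1}$, $y_i \prec x_{i+2}$, and the extra comparability $b \prec x_1$ in families $3$ and $4$ --- and also rule out extraneous comparabilities, so that pairs $(u,v)$ that ${\cal F}_i$ declares incomparable really are incomparable in $P$. For each such pair $(u,v)$ I would argue by contradiction: if the relation in $P$ were not the one required, then $G'(P,f)$ would contain an arc $(u,v)$ or $(v,u)$ whose use replaces a suitable arc-segment of $C$ by a single new arc, giving a cycle $C'$ with strictly fewer arcs. A weight accounting, exploiting that most $(-)$ arcs on the $y$-side of $C$ have weight $-2$ and most $(+)$ arcs on the $x$-side have weight $+2$, shows $wgt(C') \le 0$; this contradicts the minimality of $C$. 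The distinctness of the vertices of $C$ other than those explicitly identified in ${\cal F}_i$ follows from the same minimality: any accidental coincidence would split $C$ into two shorter closed sub-walks, one of which retains a $(-)$ arc by Lemma~\ref{plus-minus-lem} and has non-positive weight, contradicting the minimum-arc choice of $C$.

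The main obstacle will be the bookkeeping in the shortcut calculations, particularly the boundary cases ($i = 0$ and $i = t$) where the short side from $u$ to $v$ has only two arcs, and the cases where a shortcut path passes through the exceptional arcs $(a, y_{t+1})$, $(y_1, y_0)$, $(b, x_1)$, $(x_0, x_1)$, or $(x_{t+1}, a)$ whose weights differ from the bulk. Organising the cases by family and by the side of $C$ being shortcut keeps the argument uniform across the four cycle families, and in every case the fact that $wgt(C) \le 0$ together with the integer arc weights of $G'(P,f)$ provides enough slack to conclude $wgt(C') \le 0$ after the swap.
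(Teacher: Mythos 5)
Your plan is correct and is essentially the paper's own argument: read the chain, the $x$-side incomparabilities, and the weights directly off the arcs of $C$, then force every remaining comparability and incomparability of ${\cal F}_i$ by replacing a segment of $C$ with a single arc and contradicting minimality, with the weight bookkeeping you defer being exactly what the paper carries out via its observation that any segment of weight at least $3$ forces $u \prec v$ together with the explicit segment-weight computations ($\pm 1$, $0$) for the pairs $x_i, y_i$ and $x_i, y_{i-1}$ and the boundary cases near $b$, $x_0$, $y_0$. The only superfluous step is your treatment of vertex coincidences, since a cycle in this paper is by definition a sequence of distinct vertices.
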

 
 \begin{proof}
 By the definition of $G'(P,f)$, we know  that in  poset $P$ we have $y_{t+1} \succ y_t \succ \cdots  \succ y_1 \succ y_0 \succ b$,   $x_i \parallel x_{i+1}$ for $1 \le i \le t$, and $x_{t+1} \parallel a$.  Likewise, $a \succ y_{t+1}$ if $C$ belongs to $\mathcal{C}_1$ or $\mathcal{C}_3$,
 $b \parallel x_1$ if $C$ belongs to $\mathcal{C}_1$ or $\mathcal{C}_2$, and 
  $b \parallel x_0$ and $x_0 \parallel x_1$ if  $C$ belongs to $\mathcal{C}_3$ or $\mathcal{C}_4$.    If $C$ belongs to $\mathcal{C}_3$ or $\mathcal{C}_4$ and $b \succ x_1$ or $b \parallel x_1$, then replacing segment 
 $b \xrightarrow{  }x_0\xrightarrow{  }x_1  $ by the arc  $b \xrightarrow{  } x_1  $ in $C$ results in a shorter cycle with weight at most 0, a contradiction.    Hence $b \prec x_1$ when $C$ belongs to $\mathcal{C}_3$ or $\mathcal{C}_4$.  Similarly, 
   $y_0 \prec x_2$ since $f(x_1) = 2$ and if $y_0 \succ x_2$ or $y_0 \parallel x_2$, replacing segment ${_{y_0} S_{x_2}}$ by  the arc $y_0 \to x_2$ would result in a shorter cycle with weight at most 0, a contradiction.

   It remains to show the following relationships exist in $P$ between points of $\mathcal{C}$:  (i) $x_i \prec x_{i+2}$ for $0 \le i \le t-1$, \ (ii) $y_i \prec x_{i+2}$ for $0 \le i \le t-1$, \  (iii) $x_i \prec y_{i+1}$ for $1 \le i \le t$, \  (iv) $x_i \parallel  y_i$ for $1 \le i \le t+1$,  and (v) $x_i \parallel  y_{i-1}$ for $1 \le i \le t+1$.
 First consider a segment ${_u S_v}$ of $C$ with $wgt({_u S_v}) \ge 3$ and $S \neq \emptyset$.    If $u \succ v$ or $u \parallel v$ for any $i$ then we could replace the segment $S$ by  the arc $u \to v$  to obtain a shorter cycle whose weight is at most 0, contradicting the minimality of $C$.  Thus whenever a segment ${_u S_v}$ of $C$ has $wgt({_u S_v}) \ge 3$, we can conclude that $u \prec v$ in $P$.  This immediately implies that (i) $x_i \prec x_{i+2}$  for all $i \ge 0$,  (ii) $y_i \prec x_{i+2}$ for all $i \ge 1$, and (iii) $x_i \prec y_{i+1}$ for $i \ge 1$.    
 
 Next we show (iv) $x_i \parallel y_i$ for  all $i \ge 0$.  In each of the four families, for each $i$ we have $wgt({_{y_i} C_{x_i}}) = -1$ and  $wgt({_{x_i} C_{y_i}}) = 1$.  If $y_i \succ x_i$ we could replace the non-trivial segment ${_{y_i} C_{x_i}}$ by  the arc $y_i \to x_i$ to get a shorter cycle with weight at most 0, a contradiction.  If $x_i \succ y_i$, the segment ${_{x_i} C_{y_i}}$  is non-trivial and similarly can be replaced by $x_i \to y_i$ to get a   contradiction.    Thus $x_i \parallel y_i$ for  all $i \ge 0$. 
 
 Finally,  we show (v).  For $i > 1$, in each family, and for $i \ge 1$ in families 3 and 4, $wgt({_{y_{i-1}} C_{x_i}}) = 1$  and $wgt({_{x_i} C_{y_{i-1}}}) = -1$.  Hence by the argument above, $x_i \parallel y_{i-1}$.  In families 1 and 2,  $wgt({_{y_0} C_{x_1}}) = wgt({_{x_1} C_{y_0}}) = 0$ and replacing  $_{y_0} C_{x_1}$ by  the arc $y_0 \to x_1$  if $y_0 \succ x_1$ or replacing ${_{x_1} C_{y_0}}$ by the arc  $x_1 \to y_0$ if $x_1 \succ y_0$ results in a shorter cycle of weight 0, a contradiction.  Hence $x_i \parallel y_{i-1}$ for all $i \ge 1$.
 \end{proof}

We now have the tools to complete the proof of Theorem~\ref{main-thm}.

\begin{proof}

\noindent $ (3) \Longrightarrow (2)$.
Let $P$ be a poset with $P = (X, \prec)$,  and $f$ be a weight function  $f:X \to \{1,2\}$ for which none of the weighted posets in $\cal F$  of Definition~\ref{forb-def} is induced in   $(P,f)$.   

 For a contradiction,  assume  the weighted digraph $G(P,f)$ has a negative cycle, so by Proposition~\ref{G'-rem}, the weighted digraph $G'(P,f)$ has a cycle whose weight is at most 0.   Let $C$ be such a cycle in $G'(P,f)$ with the minimum number of arcs, thus $C$ satisfies the  minimality hypothesis  of Definition~\ref{min-hyp-def}.
 By Lemma~\ref{families-lem}, 
 the cycle $C$ belongs to one of the four cycle families of Figure~\ref{cycle-family-fig}.  Now by Lemma~\ref{cycle-to-poset-lem}, 
 $(P,f)$ contains an induced weighted poset from ${\cal F}_i$ for some $i$, a contradiction.
 \end{proof}   
  
   As described at the end of Section~\ref{0-1-sec},
   our results give a polynomial time certifying algorithm to determine whether a weighted poset  $(P,f)$, with weights in the set $\{1,2\}$,   has an interval representation ${\mathcal I} = \{I_x: x \in X\}$ in which $|I_x| = f(x)$ for each point $x$.  In the affirmative case, such a  representation can be obtained  in polynomial time as described in the proofs of     Proposition~\ref{potential-prop} and Theorem~\ref{potential-no-neg}.
   Otherwise a negative cycle with a minimum number of arcs is detected and a corresponding forbidden weighted poset from 
   Theorem~\ref{main-thm} can be found.
   
   \section{Conclusion}
   
   In this paper we  use digraph methods to find interval representations of posets in which there are two permissible interval lengths, either  $\{0,1\}$ or $\{1,2\}$.    We characterize those posets that have an interval representation in which the interval lengths are in the set $\{0,1\}$.  We do not have an analogous forbidden poset characterization when the set of permissible lengths is $\{1,2\}$.  Indeed, we do not expect to find an analogue of Lemma~\ref{length-lem}, which allowed us to determine the interval length corresponding to each point.
Instead,  our characterization in Section~\ref{1-2-sec}
  involves  posets in which each point has a weight of $1$ or $2$,   and these weights correspond  to   pre-specified interval lengths.

  Similar digraph methods can be used to provide   efficient algorithms to check for a representation when each interval has length between a specified lower and upper bound (see \cite{Is09}).   In \cite{BoIsTr-2} we use digraph methods to give a simple proof of the theorem due to Fishburn \cite{Fi83} characterizing interval orders that have a representation in which all interval lengths are between 1 and a fixed integer $k$.
  However, these methods do not appear to extend to instances when  set of possible interval lengths is not connected.

   When there are two permissible interval  lengths and one length is 0, by scaling we may assume the other length is 1. So there are no further cases to consider. Also by scaling, when there are two permissible interval lengths and neither is 0 we may assume that the smaller is 1. It would be natural to next consider the larger length to be (k+1)/k (corresponding to lengths k and (k+1) or to consider the larger length to be k for some integer k.  While digraph models give efficient algorithms for recognizing whether an order belongs to these classes, we seek forbidden order characterizations analogous to Theorem~\ref{main-thm}.

\end{document}